\newcommand{\cS}{{\mathcal S}}
\DeclareMathOperator{\ID}{ID}
\DeclareMathOperator{\med}{med}
\begin{document}

\title{Injective split systems}

\author{M. Hellmuth \and K. T. Huber \and V. Moulton \and G. E. Scholz \and P. F. Stadler}


\institute{M. Hellmuth\at
              Department of Mathematics, Faculty of Science, Stockholm University, Sweden.\\
		   K. T. Huber \at
              School of Computer Sciences, University of East Anglia, Norwich, UK.\\
           V. Moulton \at
              School of Computer Sciences, University of East Anglia, Norwich, UK.\\
           G. E. Scholz \at
              Bioinformatics Group, Department of Computer Science \& Interdisciplinary Center for Bioinformatics, Universität
Leipzig, Germany.
			\email{guillaume@bioinf.uni-leipzig.de} \\
           P. F. Stadler \at
              {Bioinformatics Group, Department of Computer Science \& Interdisciplinary Center for Bioinformatics, Universität
              Leipzig, Germany $\ \cdot \ $
Max Planck Institute for Mathematics in the Sciences,
Leipzig, Germany $\ \cdot \ $
Department of Theoretical Chemistry, University of Vienna, Austria $\ \cdot \ $
Facultad de Ciencias, Universidad National de Colombia,
Bogot{\'a}, Colombia $\ \cdot \ $
 Santa Fe Institute, Santa Fe, NM, USA.}
              \\
}

\date{Received: date / Accepted: date}

\maketitle

\begin{abstract}
A split system $\mathcal S$ on a finite set $X$, $|X|\ge3$, is a set of 
bipartitions or splits of $X$ which contains all splits of 
the form $\{x,X-\{x\}\}$, $x \in X$. To any such split system $\mathcal S$  we can 
associate the
Buneman graph $\mathcal B(\mathcal S)$ which is 
essentially a median graph with leaf-set $X$ that displays 
the splits in $\mathcal S$.
In this paper, we consider properties of injective split systems, that
is, split systems $\mathcal S$ with the property that 
$\med_{\mathcal B(\mathcal S)}(Y) \neq \med_{\mathcal B(\mathcal S)}(Y')$ for
any 3-subsets $Y,Y'$ in $X$, where $\med_{\mathcal B(\mathcal S)}(Y)$
denotes the median in $\mathcal B(\mathcal S)$ of the three elements in $Y$ considered 
as leaves in $\mathcal B(\mathcal S)$.
In particular, we show that for any set $X$ there always exists an injective split 
system on $X$, and
we also give a characterization for when a split system is injective. We also consider
how complex the Buneman graph $\mathcal B(\mathcal S)$
needs to become in order for a split system $\mathcal S$ on $X$ to be injective. We do this 
by introducing a quantity for $|X|$ which we call the injective dimension for $|X|$, 
as well as two related quantities, called the injective 2-split and the rooted-injective
dimension. We derive some upper and lower bounds for
all three of these dimensions and also prove that some of these bounds are tight.
An underlying motivation for studying 
injective split systems is that they can be used to
obtain a natural generalization of symbolic tree maps.
An important consequence of our results is that any three-way symbolic map on $X$ can 
be represented using Buneman graphs.

\keywords{Median graph \and Split system \and Buneman graph}
\end{abstract}

\maketitle

\section{Introduction}

Let $X$ be a finite set $|X| \ge 3$.
A {\em (three-way) symbolic map (on $X$)} is a map $\delta:{X \choose 3} \to M$
to some set $M$ of symbols.
In \cite{HMS19}, a special type of symbolic map
was studied, called a {\em symbolic tree map} which arises as follows.
Let $T$ be a phylogenetic tree with leaf-set $X$ (i.e. an unrooted tree with no vertices 
of degree two and leaf set $X$ 
\cite{SS03}) 
in which each interior vertex $v$ of $T$ is labelled by some 
element $l(v)$ in $M$ by some labelling map $l$. 
The symbolic tree map $\delta$ associated to $T$ is 
the map from ${X \choose 3}$ to $M$ that is 
obtained by setting
\[
\delta(Y) = l(\med_T(Y)), \,\,\,\, Y \in {X \choose 3}, 
\]
where $\med_T(Y)$ is the unique interior vertex of $T$ 
that belongs to the shortest paths between 
each pair of the three vertices in $Y$, and ${X \choose 3}$ denotes the set of all 3-subsets of $X$. 
For example, for the
symbolic tree map $\delta$ associated to the labelled 
tree in Figure~\ref{fig-intro}(i), $\delta(\{1,2,3\})=c$, and $\delta(\{2,3,4\})=b$.
Symbolic tree maps are closely 
related to {\em symbolic ultrametrics} \cite{BD98} and also appear in 
the theory of hypergraph colourings \cite{G84} -- see \cite{HMS19} for more details, 
where amongst other results, a characterization of symbolic tree maps is presented.
There are also close connections with cograph theory \cite{H13} and modular
decompositions \cite{B22}.

\begin{figure}[h]
\begin{center}
\includegraphics[scale=0.8]{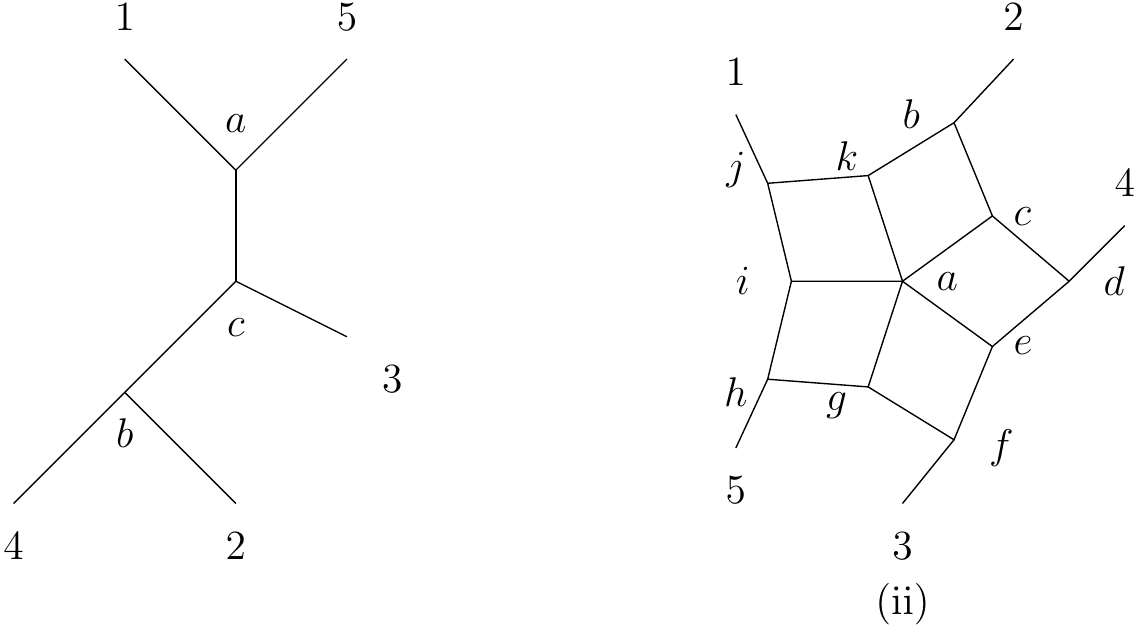}
\caption{For $X=\{1, \ldots, 5\}$, a phylogenetic tree  on $X$ in (i) and a Buneman graph 
on $X$ in (ii). In (i) the internal vertices are labelled by the 
  elements in $M=\{a,b,c\}$ and in (ii) they are labelled by the elements in 
  $M=\{a,b,\dots,k\}$.}
\label{fig-intro}
\end{center}
\end{figure}

In \cite{HMS19} it was asked how results on symbolic tree maps might
be extended to {\em Buneman graphs} \cite{D97} (see also \cite[p.8]{B22}), as these graphs
provide a natural way to generalize phylogenetic trees. More
specifically, given a {\em split system (on $X$)}, i.\,e.\, a set $\mathcal S$ of
 bipartitions or {\em splits} of $X$
that contains all splits of the form $\{\{x\}, X-\{x\}\}$, $x  \in X$, 
then the Buneman graph $\mathcal B(\mathcal S)$ on $X$ associated to $\mathcal S$ 
is essentially a median graph with leaf-set $X$
(see Section~\ref{sec:preli} for more details). 
The fact that $\mathcal B(\mathcal S)$ is a median graph implies
that for any 3-subset $Y$ of $X$, there exists a unique vertex 
$\med_{\mathcal B(\mathcal S)}(Y)$ 
in   $\mathcal B(\mathcal S)$ (or {\em median}),
that lies on shortest paths between any pair of elements in $Y$. 
Since every phylogenetic tree is a Buneman graph, the 
notion of a symbolic tree map naturally generalises by considering labelling maps $\delta$ that can be represented by 
labelling the internal vertices of some Buneman graph $\mathcal B(\mathcal S)$,
and, for any 3-subset $Y$ of $X$, taking $\delta(Y)$ 
to be the label of  $\med_{\mathcal B(\mathcal S)}(Y)$. For example, for the
map $\delta$ associated to the interior vertex-labelled
Buneman graph depicted in Figure~\ref{fig-intro}(ii), 
$\delta(\{1,2,3\})=k$, and $\delta(\{3,4,5\})=f$.

It is therefore of interest to understand under what circumstances we 
can represent for a split system $\mathcal S$ on $X$
a symbolic map $\delta$ on $X$ by labelling the vertices of some Buneman graph 
$\mathcal B(\mathcal S)$
on $X$ and vertex set $V$. In other words, we want to find some labelling map $l \colon V-X\to M$ 
such that $\delta(\{x,y,z\}) = l(\med_{\mathcal B(\mathcal S)}(Y))$ for all  
$Y \in {X \choose 3}$.
Clearly this is the case if there is some split system $\mathcal S$ on $X$ such that 
\begin{equation}\label{is-injective}
\med_{\mathcal B(\mathcal S)}(Y) \neq \med_{\mathcal B(\mathcal S)}(Y') \mbox{ for all distinct } Y, Y' \in {X \choose 3},
\end{equation}
since then we can just label the vertex $\med_{\mathcal B(\mathcal S)}(Y)$ by $\delta(Y)$ for every 3-subset $Y$ of $X$.
For example, the Buneman graph depicted in Figure~\ref{fig-intro}(ii)
enjoys Property~(\ref{is-injective}), whereas the phylogenetic tree $T$ (which is a Buneman graph
for the split system obtained by deleting all seven edges in turn) 
in Figure~\ref{fig-intro}(i) does not since, for example, $\med_T(\{1,5,3\})=\med_T(\{1,5,4\})$.
Motivated by these considerations, we call a split system $\mathcal S$ {\em injective} 
if Property~(\ref{is-injective}) holds. In this paper we shall focus on 
	understanding such split systems, in particular presenting 
	some results concerning their properties. We 
now briefly summarize them.

In the next section, we begin by presenting some preliminaries 
	concerning Buneman graphs.
In Section~\ref{sec:c2} we then prove that for any finite set $X$ with $|X|\ge 3$, 
there always exists some injective split system on $X$. In particular, we
show that the split system on $X$ which contains all 
those splits $\{A,B\}$ of $X$ with $\min\{|A|,|B|\}| \le 2$,
and the split system that is obtained by deleting any pair of
edges in a cycle with vertex set $X$ are both injective (Theorem~\ref{circbu}). 
In particular, as mentioned above, it follows that any 
symbolic map $\delta$ on a set $X$ 
can be represented by some Buneman graph.

In Section~\ref{sec:dice}, we provide a characterization of 
injective split systems (Theorem~\ref{dices}). 
This characterization is obtained by considering 
	how the restriction of a split system on $X$ to small subsets  of $X$ 
	partitions these subsets.
In particular, it implies that it can be decided if a split system $\mathcal S$ on $X$ is injective
or not by considering the restriction of $\mathcal S$  to subsets of $X$ with size at most 6. 

In general, since we can always represent a symbolic map by some
Buneman graph, we would like to find representations 
that are as simple as possible. 
Since for any split system $\mathcal S$ the Buneman graph $B(\mathcal S)$ is an 
isometric subgraph of an $|\mathcal S|$-cube in which the 
convex hull of any isometric cycle of length $k$ is a $k$-cube, $k\ge 3$, a 
natural measure for the complexity of a split system $\mathcal S$ 
is the dimension of the largest isometric $k$-cube in $B(\mathcal S)$.
We call this quantity the \emph{dimension} of $\mathcal S$; for
example, the split systems in  Figure~\ref{fig-intro}(i) and (ii) have 
dimension 1 and 2, respectively.

In Section~\ref{sec:id}, we investigate the
notion of the \emph{injective dimension}  $ID(n)$ which we define to be
the smallest dimension of any injective split system on a set 
of size $n$, $n \ge 3$. In particular, as well as giving the values of $\ID(n)$ for all $n \le 8$, we
show that $\ID(n) \leq \lfloor \frac{n}{2} \rfloor$, and that $\ID(n) \ge 3$ for all $n 
\ge 8$ (Theorem~\ref{cor:ID-numbers}).
As an immediate corollary to this result it follows that 
to represent arbitrary symbolic maps on sets $X$ of size 6 or more
using Buneman graphs, Buneman graphs that contain 3-cubes are required.

We continue by considering two variants of the injective dimension.
The first variant, $\ID_2(n)$, is considered in Section~\ref{sec:id2} and is 
given by restricting the definition of $\ID(n)$ to split systems $\mathcal S$ 
for which every split $\{A,B\} \in \mathcal S$ 
has $\min\{|A|,|B|\}| \le 2$. We show that for all $n \geq 5$, 
$\lfloor \frac{n}{2} \rfloor  \le \ID_2(n) \leq n-3$ (Theorem~\ref{id2-lb}) which implies that 
$\ID_2(5)=2$.
The second variant, $\ID^r(n)$, is considered in Section~\ref{sec:idr}, and is 
defined  by modifying the definition of injectivity as follows: 
We say that a split system $\mathcal S$ on $X$ is \emph{rooted-injective} relative to some $r \in X$ if
\[
\med_{\mathcal B(\mathcal S)}(Z\cup\{r\}) \neq \med_{\mathcal B(\mathcal S)}(Z' \cup \{r\}) \mbox{ for all distinct } Z, Z' \in {X \choose 2}. 
\]
The quantity $\ID^r(n)$ is given in an analogous way to $\ID(n)$
by taking the minimum over rooted-injective splits systems relative to $r$.
Using a recent result from \cite{B22} concerning rooted median graphs, we show 
that, in contrast to $\ID_2(n)$, 
$\ID^r(n)=2$ for all $n \ge 4$. We conclude in Section~\ref{sec:discuss} with a 
discussion of some open problems.

\section{Preliminaries} \label{sec:preli}

\subsection{Graphs and median graphs}	\label{sec:graphs}
    
    We consider undirected graphs $G=(V,E)$ 
    whose vertex sets $V$ are finite with $|V| \ge 2$, and whose
	edge sets $E$ are contained in $\binom{V}{2}$, i.e., graphs without loops and multiple
	edges.  A \emph{leaf} in such a graph is a vertex with degree one.  
	A \emph{cycle} is a connected graph in which every vertex has degree two.
	The \emph{length} of a cycle $C$ is the number of edges or, equivalently, the
	number of vertices in $C$.  
	A connected graph that does not contain a cycle is called a \emph{tree}.

	If $G$ is connected then we denote by $d_{G}(v,w)$  the length of a shortest path between two
	vertices $v$ and $w$ of $G$. Note that $d_{G}(v,w)=0$ if and only if $v=w$.
	A connected subgraph $G'$ of $G$ is called isometric if
   $d_{G'}(v,w) = d_{G}(v,w)$, for all vertices $v$ and $w$ in $G'$.
	A vertex $x$ in $G$ is called a \emph{median} of three vertices $u,v,w\in V$
	if $d_{G}(u,x)+d_{G}(x,v)=d_{G}(u,v)$,
	$d_{G}(v,x)+d_{G}(x,w)=d_{G}(v,w)$ and
	$d_{G}(u,x)+d_{G}(x,w)=d_{G}(u,w)$.  A connected graph is called a
	\emph{median graph} if any three of its vertices have a unique median \cite{Mulder1978}.
	In other words, $G$ is a median graph if for all vertices $u$, $v$, and $w$ in $G$, there is
	a unique vertex that belongs to shortest paths between each pair of $u, v$
	and $w$.  We denote the unique median of three vertices $u$, $v$ and $w$ in
	a median graph $G$ by $\med_{G}(u,v,w)$. Median graphs
	have several interesting characterizations and properties, see e.g. \cite{mulder2011median}.
	For example, a connected graph $G$ is a median graph if and only if the 
	convex hull\footnote{A subset $G'$ of a graph $G$ 
		is \emph{convex} if for any two vertices $v,w$ in $G'$
		\emph{every} shortest path between $v$ and $w$ is a subgraph of $G'$.}
	 of any isometric cycle of $G$ is a hypercube (see e.g. \cite{klavzar1999median}).
	
\subsection{Buneman graphs}

From now on, we let $X$ be a finite set with $|X|\ge 3$. A {\em split (of 
$X$)} is a bipartition $A|B=B|A$ of $X$ into
two non-empty subsets, that is, $A,B\subset X$, $A\cap B=\emptyset$ and $A\cup 
B=X$. 
For simplicity, we write $a_1\ldots a_k|b_1\ldots b_l$ or $a_1\ldots a_k|\overline{a_1\ldots a_k}$ for a split
$A|B$ if $A=\{a_1,\ldots, a_k\}$ and $B=\{b_1,\ldots, b_l\}$, for some $k,l\geq 1$.
We call the sets $A$ and $B$ the \emph{parts} of the split $A|B$. 
If $S=A|B$ 
is such that $|A|<|B|$ then we call $A$ the {\em small part} of $S$.
The {\em size} of a split $A|B$ is defined as $\min\{|A|,|B|\}$,
and if a split $S$ has size $r$ we call $S$ an {\em $r$-split}.
A split $A|B$ of $X$ is called \emph{trivial} if it has size $1$ or, equivalently, 
if $A|B$ is of the form $x|\overline{x}$ for some $x\in X$. 
For a split $S=A|B$ of $X$, we let $S(x)$ denote the part of 
$S$ that contains $x$. 
We say that
$S$ {\em separates} two elements $x$ and $y$ in $X$ if $S(x)\not=S(y)$.
From now on we shall assume that all split systems on $X$  contain all trivial splits on $X$.

Following \cite{D97}, we define for a split system $\mathcal S$ on $X$, the 
{\em Buneman graph $\mathcal B(\mathcal S)$ (on $X$)} 
to be the graph with
vertex set consisting of all maps $\phi: \mathcal S \to \mathcal P(X)$ 
satisfying the following two conditions:
\begin{itemize}
\item[(B1)] For all $S\in \mathcal S$, $\phi(S) \in S$.
\item[(B2)] For all $S, S' \in \mathcal S$ distinct, $\phi(S) \cap \phi(S') \neq 
\emptyset$.
\end{itemize}
Two vertices $\phi$ and $\phi'$ in $\mathcal B(\mathcal S)$ are joined by an edge if 
there 
is a unique split $S \in \mathcal S$ such that $\phi(S) \neq \phi'(S)$. 
For example, the graphs in Figure~\ref{fig-intro}(i) and (ii) 
are Buneman graphs on $X=\{1,\ldots, 5\}$ for the split systems
\[\mathcal S_1=\{15|234,24|135\} \cup \{x|\overline{x} \,:\, x \in X\}\]
and
\[\mathcal S_2=\{15|234,24|135, 12|345, 34|125,35|124\} \cup \{x|\overline{x} \,:\, x \in X\},\] 
respectively.

We now summarise some relevant properties of the Buneman graph
(for proofs of these facts see e.g. \cite[Chapter 4]{D12}; see also \cite{BG91} using different notation).

	\begin{enumerate}
		\item[(S1)]  For all $x \in X$, the map 
		$\phi_x: \mathcal S \to \mathcal P(X)$ given by
		putting $\phi_x(S)=S(x)$, for all $S \in \mathcal S$, is a leaf 
		in $\mathcal B(\mathcal S)$. 
		\item[(S2)]  Let $S=A|B\in \mathcal{S}$. Then 
									the removal of all edges $\{\phi,\phi'\}$ in
									$\mathcal B(\mathcal S)$ with $\phi(S)\neq \phi'(S)$
								   disconnects 
								   $\mathcal B(\mathcal S)$  into precisely two  connected 
								   components, one of which contains the leaves $\phi_a$, $a \in A$ and the other
								   the leaves $\phi_b$, $b \in B$.
		\item[(S3)] $\mathcal B(\mathcal S)$ is a median graph.
		\item[(S4)] $\mathcal B(\mathcal S)$ is an isometric subgraph of 
			the $|\mathcal S|$-dimensional hypercube consisting of all those maps $\phi: \mathcal S \to \mathcal P(X)$ 
			that only satisfy Property~(B1) in the definition of the Buneman graph 
			(with edge set defined in the analogous same way).
		\item[(S5)] For any three vertices $\phi_1, \phi_2,\phi_3$
	    in $\mathcal B(\mathcal S)$, the median 
		of $\phi_1, \phi_2$ and $\phi_3$ in $\mathcal B(\mathcal S)$ is the map that
		assigns to each split $S \in \mathcal S$ the part of $S$ of multiplicity two or more in the 
		multiset $\{\phi_1(S),\phi_2(S),\phi_3(S)\}$ (see also \cite[p.\ 1905, Equ.\ (1)]{D11}).
	\end{enumerate}

Suppose that $\cS$ is a split system on $X$. 
In light of Property~(S1), we shall consider $X$ as being the leaf-set of $\mathcal B(\mathcal S)$, since
each $x \in X$ corresponds to the map $\phi_x$ in $\mathcal B(\mathcal S)$.
As an example for (S2), consider the  tree in Figure~\ref{fig-intro}(i). 
Removing the edge associated to the split $15|234$ disconnects the tree 
into two trees with  leaf sets $\{1,5\}$ and $\{2,3,4\}$, respectively. 
In this way, we see that $\mathcal B(\mathcal S_1)$ displays 
each of the splits in $\mathcal S_1$.

Note that by Property~(S3) and the fact mentioned at the end Section~\ref{sec:graphs},
 the convex hull of any isometric  cycle in $\mathcal B(\mathcal S)$ is a hypercube. In light of this, we
define the {\em dimension} $\dim(\mathcal S)$ of a split system $\mathcal S$ 
to be the dimension of the largest hypercube contained in $\mathcal B(\mathcal S)$
in case $\mathcal B(\mathcal S)$ is not a phylogenetic tree and one otherwise.
This dimension can be characterized in terms of splits as follows.
Suppose $S=A|B$ and $T=C|D$ are two splits in $\mathcal S$. 
Then $S$ and $T$ are called {\em incompatible} if $S\not=T$ and 
$A\cap C$, $A \cap D$, $B\cap C$ and $B \cap D$ are all non-empty; otherwise $S$ 
and $T$ are called {\em compatible}. Calling a set $\mathcal S$ of splits {\em incompatible}
if any two splits in $\mathcal S$ are incompatible, then $\dim(\mathcal S)$  
is equal  to the maximum size of an  incompatible subset  
of $\mathcal S$
(see e.g. \cite[p. 445]{C08}). If $\mathcal B(\mathcal S)$ contains a cycle then 
it must contain a hypercube of dimension two or more. 
Hence, a split system $\mathcal S$ on $X$
is 1-dimensional 
if and only if $\mathcal B(\mathcal S)$ is a phylogenetic tree on $X$ (in which case it has  $|\mathcal S|+1$ vertices and $|X|$ leaves), a 
fact which also holds if and only if every 
pair of splits in $\mathcal S$ is compatible (see e.g. \cite{D97}). 
In particular, as mentioned in the introduction, it follows that 
any phylogenetic tree is a Buneman graph of
some split system, and that any two distinct splits in this split system must be compatible.

\section{Two families of injective split systems}\label{sec:c2}

Let $\mathcal S$ be a split system on $X$.
For $Y =\{x,y,z\} \in {X \choose 3}$, we let $\phi_Y = \phi_{xyz}=med_{\mathcal B(\mathcal S)}(Y)$ denote
the median of $\phi_x,\phi_y,\phi_z$ in $\mathcal B(\cS)$, which exists by Property (S3).
In this notation, $\mathcal S$ is
{\em injective}  if for all $Y,Y'  \in {X \choose 3}$ distinct, we have
$\phi_Y \neq \phi_{Y'}$.
Note that if $|X|=3$, then there is only one split system $\mathcal S$ on $X$ 
(the one that  contains only trivial splits), and that $\mathcal S$  is injective, since $|\binom{X}{3}|=1$.
In this section, we show that for every set  $X$ with $|X|\ge 4$ there
exists an injective split system on $X$. To do this, we shall present two infinite 
families of injective split systems.

We begin with a simple but useful lemma.

\begin{lemma}\label{medinbu}
	Let $\mathcal S$ be a split system on a set $X$, $|X| \geq 3$, and 
	let $x,y,z \in X$ distinct. Then $\phi_{xyz}$ is the (unique) map
in $\mathcal B(\mathcal S)$ that assigns to each 
	split 
	$S \in 
	\mathcal S$ the part $A\in S$ for which
	$|A \cap \{x,y,z\}| \geq 2$.
\end{lemma}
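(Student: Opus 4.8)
The plan is to derive the statement almost immediately from Property~(S5). First I would recall that, by definition, the leaf $\phi_w$ satisfies $\phi_w(S) = S(w)$ for every $w \in X$ and every $S \in \mathcal S$. Applying Property~(S5) to the three leaves $\phi_x, \phi_y, \phi_z$ (which is legitimate since these are vertices of $\mathcal B(\mathcal S)$ by Property~(S1)), the median $\phi_{xyz}$ is the map that assigns to each split $S \in \mathcal S$ the part of $S$ occurring with multiplicity two or more in the multiset $\{\phi_x(S), \phi_y(S), \phi_z(S)\} = \{S(x), S(y), S(z)\}$. So the work reduces to identifying that part in the terms of the lemma.

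Next I would fix a split $S = A|B \in \mathcal S$ and observe the elementary counting fact that, for a part $A \in S$, the number of times $A$ appears in the multiset $\{S(x), S(y), S(z)\}$ is exactly the number of elements of $\{x,y,z\}$ lying in $A$, i.e.\ $|A \cap \{x,y,z\}|$; likewise for $B$ we get $|B \cap \{x,y,z\}| = 3 - |A \cap \{x,y,z\}|$. Since $\{x,y,z\}$ has three elements partitioned between the two parts of $S$, exactly one of $|A \cap \{x,y,z\}|$ and $|B \cap \{x,y,z\}|$ is at least $2$ (by pigeonhole one of them is $\ge 2$, and they cannot both be $\ge 2$ since their sum is $3$). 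Hence the part of $S$ of multiplicity two or more in $\{S(x),S(y),S(z)\}$ is precisely the unique part $A \in S$ with $|A \cap \{x,y,z\}| \ge 2$, which is exactly what Property~(S5) assigns to $S$ under $\phi_{xyz}$.

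Finally, for the uniqueness clause I would simply invoke Property~(S3): since $\mathcal B(\mathcal S)$ is a median graph, the three leaves $\phi_x,\phi_y,\phi_z$ have a unique median, so $\phi_{xyz}$ is the unique vertex, hence the unique map, with the stated description. I do not anticipate any real obstacle here: the lemma is essentially a reformulation of Property~(S5), and the only point requiring a (one-line) argument is the well-definedness of ``the part of multiplicity two or more'', which the pigeonhole observation above supplies.
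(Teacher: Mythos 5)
Your proposal is correct and follows essentially the same route as the paper: apply Property~(S5) to the leaves $\phi_x,\phi_y,\phi_z$ and identify the part of multiplicity two or more with the unique part meeting $\{x,y,z\}$ in at least two elements. The pigeonhole remark you add for well-definedness is left implicit in the paper but is the same observation.
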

\begin{proof}
	Let $S\in\mathcal S$. Then  $\phi_v(S)=S(v)$, for all $v\in \{x,y,z\}$. By  Property~(S5), 
	$\phi_{xyz}(S)$ is the part of $S$
	that appears twice (or more) in the multiset $\{S(x),S(y),S(z)\}$, that is, the part of $S$ 
	that contains (at least) two elements of $\{x,y,z\}$.
\end{proof}

Now, a split system $\mathcal S$ on $X$ is called {\em circular} \cite{BD92} if 
there exists a labelling $x_1, \ldots, x_n$, $n=|X|$, of the elements of 
$X$ such that all splits of $\mathcal S$ are of the form 
$x_i x_{i+1} \ldots x_j|\overline{x_i x_{i+1} \ldots x_j}$, some $1 
\leq i \leq j \leq n$.
If $\mathcal S$ is a circular split system on $X$ and there is no circular split system $\mathcal S'$ on $X$
such that $\mathcal S \subsetneq \mathcal S'$, then we say that $\mathcal S$ is a \emph{maximal circular}
split system on $X$. Note that a maximal circular split system on $X$
has size ${|X| \choose 2}$ \cite[Section 3]{BD92}.

We now use Lemma~\ref{medinbu} to show that there exist
families of split systems that are injective. 

\begin{theorem}\label{circbu}
Let $\mathcal S$ be a split system on $X$, $|X| \geq 4$. Then:
\begin{itemize}
\item[(i)] If $\mathcal S$ contains all 2-splits of $X$,  then $\mathcal 
					 S$ is injective.
\item[(ii)] If $\mathcal S$ is maximal circular, then $\mathcal S$ is injective.
\end{itemize}
\end{theorem}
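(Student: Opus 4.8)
The key tool is Lemma~\ref{medinbu}: for a $3$-set $Y=\{x,y,z\}$, the median $\phi_Y$ is completely determined by the function that sends each split $S\in\mathcal S$ to the part of $S$ containing at least two of $x,y,z$. So two $3$-sets $Y$ and $Y'$ have the same median if and only if every split in $\mathcal S$ "votes the same way" on $Y$ and on $Y'$, i.e.\ for each $S=A|B$ the majority of $Y$ and the majority of $Y'$ both fall in $A$ (or both in $B$). Thus to prove injectivity it suffices to show: for any two distinct $3$-sets $Y\neq Y'$ there is a split $S\in\mathcal S$ that separates them in this majority sense.

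For part~(i), suppose $Y=\{x,y,z\}\neq Y'=\{x',y',z'\}$. Since the sets are distinct, there is an element lying in exactly one of them; by relabelling assume $z\in Y\setminus Y'$. The plan is to exhibit a $2$-split that distinguishes $Y$ from $Y'$. The natural candidate is a split whose small part is a well-chosen $2$-subset of $Y$. Consider the three possible $2$-splits $\{x,y\}|\overline{\{x,y\}}$, $\{x,z\}|\overline{\{x,z\}}$, $\{y,z\}|\overline{\{y,z\}}$; all of these are in $\mathcal S$ by hypothesis (here we use $|X|\ge 4$, so these are genuine $2$-splits, not trivial). For the split $S=\{x,y\}|\overline{\{x,y\}}$, the majority part for $Y$ is $\{x,y\}$ (it contains $x,y$), so $\phi_Y(S)=\{x,y\}$. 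I would argue by a short case analysis on how many of $x,y$ lie in $Y'$ that at least one of these three $2$-splits has $\phi_{Y'}(S)\neq\phi_Y(S)$: roughly, if $Y'$ contains at most one of the three pairs inside its "majority side" then we are done, and the point $z\notin Y'$ forces this. The cleanest route is probably: pick two elements $a,b$ of $Y$ such that $|\{a,b\}\cap Y'|\le 1$ — such a pair exists because $|Y\cap Y'|\le 2$ and $z\notin Y'$, so one can always find a pair in $Y$ missing $Y'$ in at least one coordinate — and then $S=\{a,b\}|\overline{\{a,b\}}$ gives $\phi_Y(S)=\{a,b\}$ while $\phi_{Y'}(S)=\overline{\{a,b\}}$ since at most one element of $Y'$ lies in $\{a,b\}$. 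The main obstacle here is organising the case analysis so that it is genuinely exhaustive; the tricky configuration is $|Y\cap Y'|=2$, and one has to check that the single non-shared element $z$ still lets us pick a separating pair.

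For part~(ii), let $\mathcal S$ be a maximal circular split system, with circular ordering $x_1,\dots,x_n$ of $X$. By the previous paragraph's criterion it is enough to produce, for any $Y\neq Y'$, a circular split in $\mathcal S$ separating them in the majority sense. Here I would use the fact that a maximal circular split system has size $\binom{n}{2}$ and, concretely, contains every split of the form "an interval of the cyclic order versus its complement". Given $Y$ and $Y'$ distinct, again pick $z$ in the symmetric difference, say $z=x_k\in Y\setminus Y'$, and pick $a,b$ the two elements of $Y\setminus\{z\}$. The two elements $a,b$ together with the cyclic order cut the cycle into two arcs; on one of these two arcs, say the arc $I$, we have $\{a,b\}\cap I=\{a,b\}$ in the sense that the interval split "$a,b$ and everything between them on one side" is available, and one can further choose which side so that $z$ is or is not swept in — giving flexibility to control the $Y'$-majority. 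Concretely: among the two interval splits determined by the chord through $a$ and $b$, choose the one, call it $S=A|B$ with $\{a,b\}\subseteq A$, for which $|A\cap Y'|\le 1$; such a choice is possible because $Y'$ has three elements and the two candidate "$A$-sides" partition $X\setminus\{a,b\}$, so at least one side meets $Y'$ in at most one element (using $z\notin Y'$ to handle the boundary case). Then $\phi_Y(S)=A$ because $a,b\in A$, while $\phi_{Y'}(S)=B$ because $Y'$ has a majority in $B$; hence $\phi_Y(S)\neq\phi_{Y'}(S)$ and the medians differ. The main obstacle in part~(ii) is justifying that such a circular interval split actually lies in a maximal circular system — this should follow from the cited characterisation that a maximal circular split system on $X$ consists of exactly the $\binom{n}{2}$ splits $\{x_i\ldots x_j\mid \overline{x_i\ldots x_j}\}$ — and again, carefully handling the degenerate cases where $a,b$ are cyclically adjacent or where $Y$ and $Y'$ overlap in two elements.

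Overall, both parts reduce, via Lemma~\ref{medinbu}, to the same combinatorial statement: distinct $3$-sets can always be separated by a $2$-split (respectively a circular interval split) in the "majority" sense, and the real work is a finite case check that the required separating split is present in $\mathcal S$.
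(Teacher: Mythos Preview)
Your argument for part~(i) is correct and is essentially the paper's proof. The cleanest version is exactly what you sketch: choose an element of $Y\setminus Y'$ (you call it $z$, the paper calls it $x$), and take the $2$-split whose small part is that element together with any other element of $Y$; then the small part meets $Y'$ in at most one point, so Lemma~\ref{medinbu} gives $\phi_Y\neq\phi_{Y'}$. No further case analysis is needed.

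Your argument for part~(ii), however, has a genuine gap. After choosing $z\in Y\setminus Y'$ and letting $a,b$ be the other two elements of $Y$, you assert that one of the two closed arcs between $a$ and $b$ meets $Y'$ in at most one element, justifying this by saying that the two arcs partition $X\setminus\{a,b\}$. But the two \emph{closed} arcs both contain $a$ and $b$, so they overlap in $\{a,b\}$; hence $|A_1\cap Y'|+|A_2\cap Y'|=3+|\{a,b\}\cap Y'|$, and if $\{a,b\}\cap Y'\neq\emptyset$ your pigeonhole fails. A concrete counterexample on the $6$-cycle $1,2,\dots,6$: take $Y=\{1,3,5\}$, $Y'=\{1,4,6\}$, and choose $z=3$, so $\{a,b\}=\{1,5\}$. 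The two arcs are $\{1,2,3,4,5\}$ and $\{5,6,1\}$, and $Y'$ meets each of them in exactly two elements ($\{1,4\}$ and $\{1,6\}$ respectively). Neither split separates $Y$ from $Y'$ in the majority sense. The remark ``using $z\notin Y'$ to handle the boundary case'' does not help, since $z$ is not one of the endpoints $a,b$.

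The paper's fix is to use the non-shared element as an \emph{endpoint} of the arcs rather than excluding it. Pick $x\in Y\setminus Y'$, and consider the four closed arcs from $x$ to $y$ and from $x$ to $z$ (two directions each). Because $x\notin Y'$, one checks that at least one of these four arcs contains at most one element of $Y'$; since every such arc contains $x$ together with one of $y,z$, it carries a $Y$-majority, and the corresponding interval split (present in any maximal circular system) separates $Y$ from $Y'$. In the example above, with $x=3$, the arc $\{3,4,5\}$ works. Your overall strategy via Lemma~\ref{medinbu} is right; only the choice of which arcs to examine needs this correction.
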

\begin{proof}
For both (i) and (ii), let $Y=\{x,y,z\}$ and $Y'$ denote two distinct subsets of $X$ of size 
$3$. Assume without loss of generality that $x\notin Y'$.

(i) By Lemma \ref{medinbu}, $\phi_Y$ 
is the unique map $\mathcal B(\mathcal S)$ that assigns to each
		split $S \in 
		\mathcal S$ the part $A$ of $S$ such that $|A \cap Y| \geq 2$.
		 It follows that for $S=xy|\overline{xy}$ (which is an element of $\mathcal S$ as it has size two), $\phi_{xyz}(S)=\{x,y\}$.
		Since $x\not\in Y'$, we obtain $\phi_{xyz}(S)= X-\{x,y\}$.
	Consequently, $\phi_Y \neq \phi_{Y'}$.

(ii) 
Put $X=\{x_1,\ldots, x_n\}$, $n\geq 4$.
Then there exist $i,j,k\in\{1,\ldots, n\}$ with $i<j<k$ ($\mathrm{mod}\ n$) such that $x=x_i$, $y=x_j$ and $z=x_k$.
With respect to the circular ordering of $X$ induced by $\mathcal S$ it follows that
one of the four sets $\{x=x_i,x_{i+1}, \ldots, y=x_l\}$, $\{y=x_l,x_{l+1}, \ldots, x=x_i\}$, 
$\{x=x_i,x_{i+1}, \ldots, z=x_k\}$ and $\{z=x_k,x_{i+1},\ldots, x=x_i\}$
must contain at most one element of $Y'$.
Let $A$ be such a set. Since $\mathcal S$ is maximal circular by assumption, it follows that the split $S=A | X-A$ 
is contained in $\mathcal S$. By Lemma \ref{medinbu}, $\phi_Y(S)=A\not=X-A=\phi_{Y'}(S)$.
Hence, $\phi_Y \neq \phi_{Y'}$.
\end{proof}

\begin{figure}[h]
\begin{center}
\includegraphics[scale=0.5]{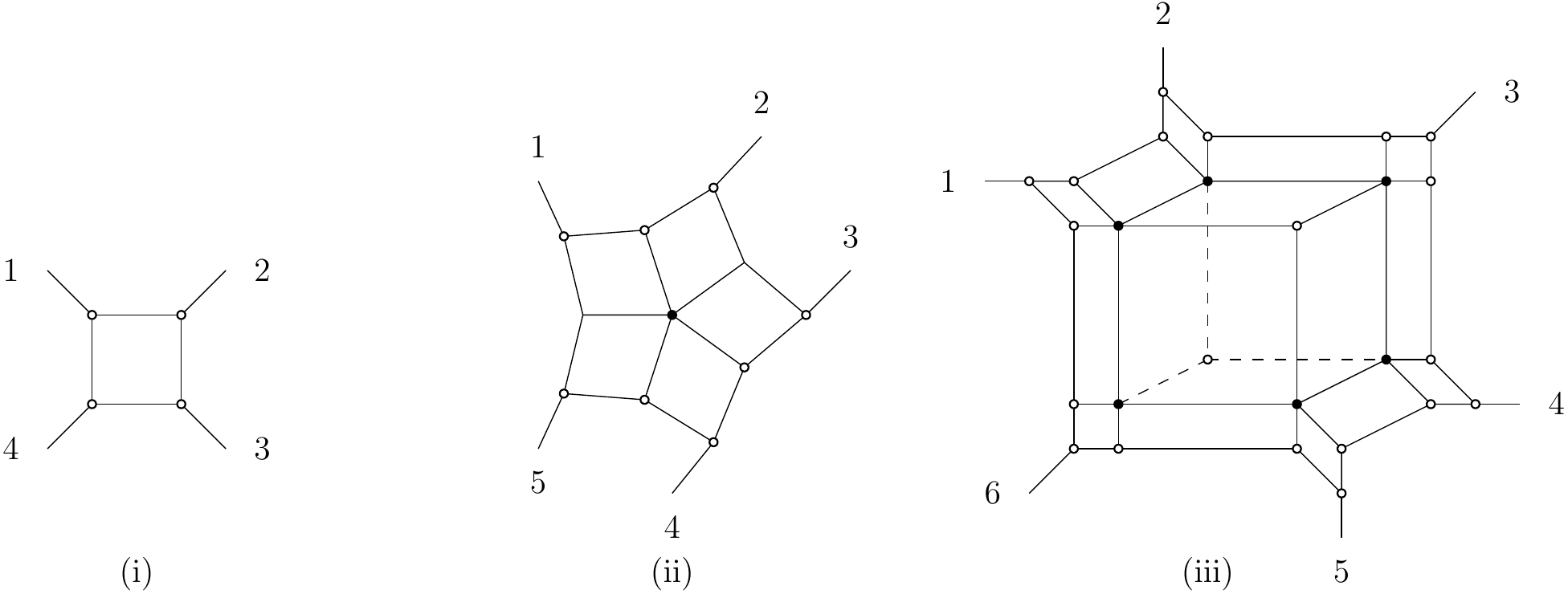}
\caption{For $X=\{1,\dots,n\}$ with $n=4, 5,6$ and the induced natural ordering of $X$, the
respective Buneman graphs on $X$ of the associated maximal circular 
	split systems on $X$ where (i) is $n=4$, (ii) is $n=5$, and (iii) is $n=6$. In all cases, 
	leaves are indicated in terms of the elements in $X$. Vertices that are of the form 
	$\phi_{xyz}$, some $x,y,z \in X$, are indicated as unfilled circles 
	and all other non-leaf vertices are indicated as filled circles.}
\label{bubu}
\end{center}
\end{figure}

In view of Theorem~\ref{circbu}~(ii), it is interesting to understand if 
maximal circular split systems admit proper subsets that are also injective. As it turns out, 
the answer is no in general, as we show in our next result. 

\begin{proposition}\label{prop:circ-proper-subset}
Let $\mathcal S$ be a circular split system  on $X$ with $|X|\geq 4$ and 
let $\mathcal S'$ denote a split system on $X$ that is contained in 
$\mathcal S$ as a proper subset.
Then $\mathcal S'$ is not injective.
\end{proposition}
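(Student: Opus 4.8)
The plan is to produce two distinct $3$-subsets of $X$ that have the same median in $\mathcal B(\mathcal S')$. Since $\mathcal S'$ is a split system it contains every trivial split of $X$, so any split in $\mathcal S\setminus\mathcal S'$ is non-trivial; as $\mathcal S'\subsetneq\mathcal S$ we may fix such a split $S_0=A_0|B_0$, with $|A_0|,|B_0|\ge 2$. Let $x_1,\dots,x_n$ ($n=|X|$) be a labelling of $X$ realising the circularity of $\mathcal S$. After cyclically renumbering these labels — which keeps every split of $\mathcal S$ circular — we may assume $S_0=\{x_1,\dots,x_m\}\,|\,\{x_{m+1},\dots,x_n\}$ with $2\le m\le n-2$.

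Now set $Y=\{x_1,x_m,x_n\}$ and $Y'=\{x_1,x_{m+1},x_n\}$. Using $2\le m\le n-2$ one checks immediately that $Y$ and $Y'$ are both $3$-subsets of $X$ and that $Y\ne Y'$, since $x_m\notin Y'$. The core of the argument is to show that $\med_{\mathcal B(\mathcal S)}(Y)$ and $\med_{\mathcal B(\mathcal S)}(Y')$ — which by Lemma~\ref{medinbu} are the maps on $\mathcal S$ sending a split $S$ to the part of $S$ meeting $Y$, respectively $Y'$, in at least two elements — agree at every split of $\mathcal S$ except $S_0$. So fix $T\in\mathcal S$ with $T\ne S_0$. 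If $x_m$ and $x_{m+1}$ lie in the same part of $T$ then, since $Y$ and $Y'$ also share $x_1$ and $x_n$, the sets $Y$ and $Y'$ have the same intersection pattern with the two parts of $T$, and Lemma~\ref{medinbu} gives the desired equality at $T$. Otherwise $T$ separates the consecutive elements $x_m$ and $x_{m+1}$, so one of the two ``breakpoints'' of the pair of cyclic intervals forming $T$ is the gap between $x_m$ and $x_{m+1}$; since a circular split is determined by its unordered pair of breakpoints and $T\ne S_0$, the other breakpoint of $T$ is \emph{not} the gap between $x_n$ and $x_1$, so $x_1$ and $x_n$ lie in a common part $P$ of $T$. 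Then $\{x_1,x_n\}\subseteq P$ forces $|P\cap Y|\ge 2$ and $|P\cap Y'|\ge 2$, so by Lemma~\ref{medinbu} both medians equal $P$ at $T$. Finally, at $T=S_0$ one reads off directly that $\med_{\mathcal B(\mathcal S)}(Y)$ takes the value $\{x_1,\dots,x_m\}$ (which contains $x_1$ and $x_m$) while $\med_{\mathcal B(\mathcal S)}(Y')$ takes the value $\{x_{m+1},\dots,x_n\}$ (which contains $x_{m+1}$ and $x_n$), and these are distinct.

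To conclude, since $S_0\in\mathcal S\setminus\mathcal S'$ we have $\mathcal S'\subseteq\mathcal S\setminus\{S_0\}$, so by the previous step the part of $S$ meeting $Y$ in at least two elements coincides with the part of $S$ meeting $Y'$ in at least two elements for \emph{every} $S\in\mathcal S'$. Applying Lemma~\ref{medinbu} to the split system $\mathcal S'$, this says precisely that $\med_{\mathcal B(\mathcal S')}(Y)=\med_{\mathcal B(\mathcal S')}(Y')$. As $Y\ne Y'$, the split system $\mathcal S'$ is not injective.

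I expect the one genuinely delicate point to be the middle paragraph: making the notion of the two ``breakpoints'' of a circular split precise enough to handle the cyclic (wrap-around) indexing, and justifying that a circular split is determined by its unordered pair of breakpoints. The remaining ingredients — checking that $Y,Y'$ are legitimate distinct $3$-subsets, the reduction through Lemma~\ref{medinbu}, and the two-case bookkeeping — are routine; it is also worth a quick look at extreme configurations such as $n=4$ or $|A_0|=2$, which all respect the range $2\le m\le n-2$.
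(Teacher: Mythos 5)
Your proposal is correct and follows essentially the same route as the paper: pick a non-trivial split $S_0\in\mathcal S\setminus\mathcal S'$ with parts that are cyclic intervals, take the two triples $\{x_1,x_m,x_n\}$ and $\{x_1,x_{m+1},x_n\}$ straddling its boundary, and use Lemma~\ref{medinbu} to check that their medians agree on every split of $\mathcal S$ other than $S_0$ (the paper cases on whether $x_1$ and $x_n$ are separated rather than on whether $x_m$ and $x_{m+1}$ are, but the bookkeeping is the same). Your ``breakpoint'' formulation of the second case is a clean way to justify the step the paper handles by writing the separating split explicitly as $1\dots\ell\,|\,\ell+1\dots n$.
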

\begin{proof}
Let $S_0$ be a non-trivial split in $\mathcal S-\mathcal S'$. 
	  We show that there exists 
		two subsets $Y$ and $Z$ of $X=\{1,\ldots, n\}$ distinct such that $\phi_Y(S)=\phi_Z(S)$ 
	  for all $S \in \mathcal S-\{S_0\}$.
		In particular, $\phi_Y(S)=\phi_Z(S)$ for all $S \in \mathcal S'$, so $\mathcal S'$ is not injective.

Assume that $\mathcal S$ is circular for the
natural ordering of $X$. Without loss of generality, we may assume that
$S_0=1\ldots k|k+1 \ldots n$,  some $2 \leq k \leq \frac{n}{2}$. Consider the sets $Y=\{n,1,k\}$ and $Z=\{n,1,k+1\}$. 
Let $S\in \mathcal S-\{S_0\}$. If $S(n)=S(1)$ then, by Lemma \ref{medinbu},
$\phi_Y(S)=S(n)=\phi_Z(S)$. If $S(n)\not=S(1)$ then $S$ must be of
the form $1\dots\ell | \ell+1\dots n$, some $1\leq \ell\leq n-1$. Since $S\neq S_0$, we have $\ell\neq k$.
Hence, $S(k)=S(k+1)$. Moreover, since
$S(1)\not=S(n)$ either $1$ or $n$ must 
be contained in $S(k)$. We can then apply Lemma \ref{medinbu} again
to conclude that $\phi_Y(S)=\phi_Z(S)$ which completes 
the proof.
\end{proof}

We remark that a similar result to Proposition~\ref{prop:circ-proper-subset}
does not necessarily hold for non-circular split systems even if they are 
injective. For example, Theorem~\ref{circbu}(i) implies that
the split system $\mathcal S$ on $X=\{1, \ldots, n\}$, $n \geq 5$, that 
consists precisely of all trivial splits and 2-splits on $X$ is injective. Let
$\mathcal S^*$ denote the split system containing all splits of $\mathcal S$ except those of the form 
$1x|\overline{1x}$, $x \in X-\{1\}$. Then, $\mathcal S^*$ is injective. To see this, 
consider the proof of Theorem~\ref{circbu}(i). Then, up to 
potentially having to relabel the elements of $Y$ and $Y'$,
the elements $x$ and $y$ can always be chosen to be 
different from $1$. Hence, the split $S=xy|\overline{xy}$ such that 
$\phi_Y(S) \neq \phi_{Y'}(S)$ can always  be chosen in such a way that $ S\in\mathcal S^*$. 
As a consequence, it follows that for all $Y, Y' \in {X \choose 3}$ distinct, 
there exists a split $S$ of $\mathcal S^*$ such that $\phi_Y(S) \neq \phi_{Y'}(S)$ 
which implies that $\mathcal S^*$ is injective.

\section{Characterization of injective split systems and Dicing}\label{sec:dice}
 
In this section, we characterize injective split systems
(Theorem~\ref{dices}).  To this end, we shall consider the restriction of a
split system on $X$ to subsets of $X$ which is defined as follows.  Given a
split system $\mathcal S$ on $X$, and a subset $Y \subseteq X$ with
$|Y|\geq 3$ then we define the restriction $\mathcal S|_Y$ of $\mathcal S$
to $Y$ as the set of splits $S|_Y$ restricted to $Y$, that is,
\[
\mathcal S|_Y = \{ S|_Y= A\cap Y| B \cap Y \,:\, A|B \in \mathcal S 
\}.
\] 
Note that $\mathcal S|_Y$ is in fact a split system on $Y$ since $\mathcal S|_Y$
contains all trivial splits on $Y$.
We begin by proving a useful lemma concerning such restrictions.

\begin{lemma}\label{cases}
	Suppose that $S$ is a split on $X$ with $|X|\geq 4$, and 
	that $x,y,z,p$ are distinct elements of $X$. Then the following holds for 
	$Y=\{x,y,z,p\}$.
	\begin{enumerate} 
	\item[(i)] $\phi_{xyz}(S) \neq \phi_{xyp}(S)$ if and only if 
	$S|_Y\in \{xz|yp,yz|xp\}$. In particular, $S|_Y\not=xy|pz$.
	\item[(ii)] If $|X|\geq 5$ and $q\in X-Y$ then $\phi_{xyz}(S) \neq \phi_{xpq}(S)$ if and only if 
	$S|_{Y\cup\{q\}}$ is one of the splits $yz|xpq$,
	$pq|xyz$, $xy|zpq$, $xz|ypq$, $xp|yzq$ or $xq|yzp$.
	\item[(iii)]  If $|X|\geq 6$ and $q,r\in X-Y$ distinct then $\phi_{xyz}(S) \neq \phi_{pqr}(S)$ if and only if 
	$S|_{Y\cup\{q,r\}}$ is a 3-split or it is a 2-split of $Y$ 
	whose part of size 2 is contained in $\{x,y,z\}$ or $\{p,q,r\}$.
	\end{enumerate}
\end{lemma}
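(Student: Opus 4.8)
The plan is to deduce all three statements from Lemma~\ref{medinbu}. That lemma tells us that for a triple $W\in\binom{X}{3}$ and a split $S=A|B$ of $X$, the vertex $\phi_W(S)$ is the unique part of $S$ containing at least two elements of $W$ --- exactly one part does, since $|W|=3$ --- and I will call this the \emph{majority part} of $S$ for $W$. Since in each of (i)--(iii) the two medians being compared are parts of $S$, whether they are equal is a binary condition depending only on how $S$ partitions the at most six elements involved, i.e.\ only on the restriction $S|_{W\cup W'}$ of $S$ to the union of the two triples. So in each part the proof reduces to running through the possible restrictions $S|_{W\cup W'}$ and, for each, comparing the two majority parts. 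A recurring simplification is that whenever the two triples share an element $u$ and $S(u)$ already contains a second member of $u$'s triple, $S(u)$ is automatically the majority part for that triple, so only the positions of the non-shared elements matter.

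For (i), with $W=\{x,y,z\}$, $W'=\{x,y,p\}$ and $Y=\{x,y,z,p\}$: if $S(x)=S(y)$, this common part is the majority part for both triples, so $\phi_{xyz}(S)=\phi_{xyp}(S)$, and we may assume $S$ separates $x$ and $y$. Then the majority part for $\{x,y,z\}$ equals $S(x)$ if $z\in S(x)$ and equals $S(y)$ otherwise, and similarly for $p$. Hence $\phi_{xyz}(S)\neq\phi_{xyp}(S)$ holds exactly when $S$ separates $x$ from $y$ and also separates $z$ from $p$; among the splits of the $4$-set $Y$ this is the case precisely for $xz|yp$ and $yz|xp$, and since $x$ and $y$ lie in different parts of both of these, $S|_Y=xy|pz$ is impossible, which gives the ``in particular'' clause.

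For (ii), with $W=\{x,y,z\}$, $W'=\{x,p,q\}$ (common element $x$) and the five-element set $Y\cup\{q\}=\{x,y,z,p,q\}$: write the parts of $S$ as the side $S(x)$ containing $x$ and the opposite side. By the simplification above, the majority part for $\{x,y,z\}$ is the opposite side exactly when both $y$ and $z$ lie opposite $x$, and is $S(x)$ otherwise; likewise the majority part for $\{x,p,q\}$ is the opposite side exactly when both $p$ and $q$ lie opposite $x$. Hence $\phi_{xyz}(S)\neq\phi_{xpq}(S)$ iff exactly one of the events ``$\{y,z\}$ lies opposite $x$'' and ``$\{p,q\}$ lies opposite $x$'' occurs. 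Enumerating the three placements of $\{p,q\}$ that are not ``both opposite $x$'' under the first event yields the restrictions $yz|xpq$, $xp|yzq$ and $xq|yzp$, and symmetrically the second event yields $pq|xyz$, $xy|zpq$ and $xz|ypq$; these are exactly the six splits in the statement, and the two events are mutually exclusive so there is no overlap.

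For (iii), with $W=\{x,y,z\}$ and $W'=\{p,q,r\}$ now disjoint and $Y\cup\{q,r\}=\{x,y,z,p,q,r\}$ a six-element set: the convenient bookkeeping device is to fix one part of $S$, call it the $+$-side, and let $a$ and $b$ be the numbers of elements of $W$ and of $W'$, respectively, lying on the $+$-side, so that side has size $a+b$. The majority part for $W$ is the $+$-side iff $a\geq 2$, and the majority part for $W'$ is the $+$-side iff $b\geq 2$; hence $\phi_{xyz}(S)\neq\phi_{pqr}(S)$ iff either $a\geq 2$ and $b\leq 1$, or $a\leq 1$ and $b\geq 2$. Running through the possible $(a,b)$, this holds exactly for the four pairs with $a+b=3$ --- i.e.\ for all $3$-splits of $Y\cup\{q,r\}$ --- together with $(2,0)$ and $(1,3)$, which are the $2$-splits whose size-$2$ part lies inside $\{x,y,z\}$, and $(0,2)$ and $(3,1)$, the $2$-splits whose size-$2$ part lies inside $\{p,q,r\}$; for every remaining split (a part of size at most one, or a size-$2$ part meeting both $W$ and $W'$) one has $a\geq 2\Leftrightarrow b\geq 2$ and hence $\phi_{xyz}(S)=\phi_{pqr}(S)$. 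This is precisely the stated condition. I expect the only real obstacle to be the case enumerations in (ii) and (iii): pairing each restriction in the stated lists with the correct placement and checking that the enumeration of placements is exhaustive and non-redundant. Given the ``majority part'' reformulation from Lemma~\ref{medinbu}, nothing deeper is required.
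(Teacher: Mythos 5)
Your proposal is correct and follows essentially the same route as the paper: both reduce each part to Lemma~\ref{medinbu}'s description of the median as the ``majority part'' and then enumerate the possible restrictions of $S$ to the union of the two triples. Your $(a,b)$-counting in part (iii) is a slightly tidier bookkeeping of the same case analysis, but the underlying argument is identical.
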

\begin{proof} 
	To see Assertion~(i) observe that,	
	by Lemma~\ref{medinbu}, we have $\phi_{xyz}(S) \neq
\phi_{xyp}(S)$ if and only if one of $A$ and $B$, say $A$, contains at
	least two elements of $\{x,y,z\}$ while  $B$ contains at
	least two elements of $\{x,y,p\}$. Since $A\cap B=\emptyset$, this is only
	possible if and only if $z\in A$ and $p\in B$ while either $x\in A$ and $y\in B$ or
	$y\in A$ and $x\in B$. The latter is equivalent to  $S|_Y\in
	\{xz|yp,yz|xp\}$ which, in particular, implies that $S|_Y\neq xy|pz$.
	Hence, Assertion~(i) must hold.

	To see Assertion~(ii), observe that, by
	Lemma~\ref{medinbu}, $\phi_{xyz}(S) \neq
	\phi_{xpq}(S)$ if and only if one of $A$ and $B$,
		say $A$, contains at least two elements of $\{x,y,z\}$ and $B$  contains
	at least two elements of $\{x,p,q\}$. As is easy to see, this is the case if
	and only if $S|_{Y'}$ is not a trivial split on $Y'=Y\cup\{q\}$ and one of
	$S(y)=S(z)$ or $S(p)=S(q)$ holds. Consideration
		of all ten non-trivial splits on $Y'$ shows that $S|_{Y'}$ must be one of $yz|xpq$, $pq|xyz$, $xy|zpq$,
	$xz|ypq$, $xp|yzq$ or $xq|yzp$. Hence,
		Assertion~(ii) must hold.
	
	To see Assertion~(iii),  observe that, by
	Lemma~\ref{medinbu}, $\phi_{xyz}(S) \neq
	\phi_{pqr}(S)$ holds if and only if one of $A$ and $B$, say $A$, contains at least two
	elements of $\{x,y,z\}$ and $B$ contains at least two elements of
	$\{p,q,r\}$. Put $Y'=Y\cup\{q,r\}$, $A'= A\cap Y'$ and
	$B'=B\cap Y'$. Since $A\cap B=\emptyset$ it follows
	that $S|_{Y'}$ must be a 2- or 3-split and that if $S|_{Y'}$ is a 2-split, 	its part
	 of size 2 is contained in $\{x,y,z\}$ or
	$\{p,q,r\}$. 
	
	Conversely, put $A=\{x,y,z\}$ and $B=\{p,q,r\}$ again. If  
	$S|_{Y'} = A'|B'$ is a 3-split on $Y'=Y\cup\{q,r\}$ then, clearly,
	$|A'\cap \{x,y,z\}|\geq 2$ and $|B'\cap \{p,q,r\}|\geq 2$. Since
	$A'\subseteq A$ and $B'\subseteq B$, we obtain $\phi_{xyz}(S) \neq
	\phi_{pqr}(S)$. Furthermore, if $S|_{Y} = A'|B'$ is a 2-split such that the
	part 	of size 2 is 	contained in $A$ or $B$, then the
	other part must be of size 4 and must contain $B$ or $A$.
	Consequently, $\phi_{xyz}(S) \neq \phi_{pqr}(S)$. Hence, Assertion~(iii) must
	hold.
\end{proof}

We now make a key definition. We shall say that a split
system $\mathcal S$ on $X$

\begin{description}
\item[$\bullet$ \textnormal{\em $4$-dices $X$}] if
$|X| < 4$ or for all $Y \in {X \choose 4}$,
$\mathcal S|_Y$ contains at least two 2-splits,

\item[$\bullet$ \textnormal{\em 5-dices $X$}] if 
$|X| < 5$ or  for all $Y \in {X \choose 5}$,
$\mathcal S|_Y$ contains at least five 2-splits, and

\item[$\bullet$ \textnormal{\em 6-dices $X$}] if 
$|X| < 6$ or for all $Y \in {X \choose 6}$, $\mathcal 
S|_Y$
contains at least one 3-split or a \emph{triangle of 2-splits}, that is, 
three 2-splits of the form $xy|Y-\{x,y\}$, $xz|Y-\{x,z\}$ and 
$yz|Y-\{y,z\}$ where $x$, $y$, and $z$ are distinct elements in $Y$.

\end{description}

%
%

Note that, in general, if a split system on $X$ $k$-dices $X$  it need not $k'$-dice $X$, for
$k,k'\in \{4,5,6\}$ distinct. 
	Nevertheless, some interesting relationship between these concepts hold as the next lemma illustrates.

\begin{lemma}\label{lm-up}
Suppose $\mathcal S$ is a split system on $X$.
\begin{itemize}
\item[(i)] If $\mathcal S$ 4-dices $X$ and $|X| \geq 5$ then, for all $Y 
\in {X \choose 5}$, 
$\mathcal S|_Y$ contains at least four 2-splits.
\item[(ii)] If $\mathcal S$ 5-dices $X$ and $|X| \geq 6$ then, for all $Y 
\in {X \choose 6}$, 
$\mathcal S|_Y$ contains a 3-split or(at least) eight 
2-splits.
\end{itemize}
\end{lemma}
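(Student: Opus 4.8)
The plan is to prove both parts by the same averaging argument applied to the $2$-splits occurring in the restriction $\mathcal S|_Y$. Fix $Y\in\binom{X}{k}$, where $k=5$ for~(i) and $k=6$ for~(ii). In case~(ii) we may assume $\mathcal S|_Y$ contains no $3$-split, since otherwise there is nothing to prove; combined with the fact that a split of a $5$-element set necessarily has size $1$ or $2$, this means that in both cases every split in $\mathcal S|_Y$ is trivial or a $2$-split. Let $\mathcal P\subseteq\binom{Y}{2}$ be the set of all $2$-element parts of $2$-splits of $\mathcal S|_Y$. Then $|\mathcal P|$ is exactly the number of $2$-splits of $\mathcal S|_Y$ (the large part of such a split has size $3$ or $4$, so no $P$ and $Y-P$ can both occur), so it suffices to prove $|\mathcal P|\ge 4$ in case~(i) and $|\mathcal P|\ge 8$ in case~(ii).

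The first main step is a lifting claim: for every $w\in Y$, writing $Z=Y-\{w\}$, each $2$-split of $\mathcal S|_Z$ is of the form $\sigma|_Z$ for some $2$-split $\sigma$ of $\mathcal S|_Y$ whose $2$-element part $P$ satisfies $w\notin P$ (so $P\in\mathcal P$), and distinct $2$-splits of $\mathcal S|_Z$ arise from distinct parts $P$. For the first half one argues via how the part sizes of a split of $Y$ change when $w$ is deleted: deleting $w$ from a trivial split of $Y$ yields a trivial split of $Z$; deleting $w$ from a $2$-split of $Y$ yields a $2$-split of $Z$ if $w$ lies in the large part and a trivial split otherwise; and $\mathcal S|_Y$ contains no $3$-split at all (automatic for $k=5$, the standing assumption for $k=6$). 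Since $S|_Z=(S|_Y)|_Z$ for every $S\in\mathcal S$, any $2$-split of $\mathcal S|_Z$ must therefore be the restriction of a $2$-split of $\mathcal S|_Y$ whose $2$-part lies in $Z$. Injectivity is immediate: a $2$-split of $Z$ is determined by its pair of parts, and (for $|Z|=5$) already by its unique $2$-element part, so the part $P$ recovers the split of $Z$. Because $\mathcal S$ $4$-dices, respectively $5$-dices, $X$, and $Z\in\binom{X}{|Z|}$, the set $\mathcal S|_Z$ contains at least two, respectively at least five, $2$-splits; hence $|\{P\in\mathcal P : w\notin P\}|\ge c$ for every $w\in Y$, where $c=2$ in case~(i) and $c=5$ in case~(ii).

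The second step is the double count. For each $w\in Y$ we have $|\{P\in\mathcal P : w\in P\}| = |\mathcal P| - |\{P\in\mathcal P : w\notin P\}| \le |\mathcal P|-c$, and summing over all $w\in Y$ gives $\sum_{w\in Y}|\{P\in\mathcal P : w\in P\}| \le |Y|\,(|\mathcal P|-c)$. Counting the same incidences by parts instead, $\sum_{w\in Y}|\{P\in\mathcal P : w\in P\}| = \sum_{P\in\mathcal P}|P| = 2|\mathcal P|$. In case~(i), with $|Y|=5$ and $c=2$, this reads $2|\mathcal P|\le 5|\mathcal P|-10$, i.e. $|\mathcal P|\ge 10/3$, hence $|\mathcal P|\ge 4$. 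In case~(ii), with $|Y|=6$ and $c=5$, it reads $2|\mathcal P|\le 6|\mathcal P|-30$, i.e. $|\mathcal P|\ge 15/2$, hence $|\mathcal P|\ge 8$. This proves the two claims.

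The one place that needs care, and which I expect to be the main obstacle, is the lifting claim: one must verify that a $2$-split of $\mathcal S|_Z$ cannot be produced by a trivial split of $\mathcal S|_Y$, that in case~(ii) it cannot be produced by a $3$-split of $\mathcal S|_Y$ (ruled out by the standing assumption that $\mathcal S|_Y$ has no $3$-split), and that the assignment sending a $2$-split of $\mathcal S|_Z$ to its $2$-part $P\in\mathcal P$ is injective so that no overcounting occurs. Each of these is a short check on how the sizes of the two parts of a split behave when a single element is removed; once it is in place, the rest is the elementary averaging above.
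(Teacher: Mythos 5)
Your proof is correct, and it takes a genuinely different route from the paper's. The paper proves (i) by restricting to two specific $4$-subsets $\{x,y,z,t\}$ and $\{y,z,t,u\}$ of $Y$ and arguing that the two pairs of $2$-splits obtained are mutually distinct, and it proves (ii) by a case analysis that first locates an element $x_0$ lying in the small part of at least three $2$-splits of $\mathcal S|_Y$ and then adds the at least five $2$-splits of $\mathcal S|_{Y-\{x_0\}}$ guaranteed by $5$-dicing. You instead run a single double-counting argument on the set $\mathcal P$ of $2$-element parts: the lifting claim (every $2$-split of $\mathcal S|_{Y-\{w\}}$ comes from a $2$-split of $\mathcal S|_Y$ whose $2$-part avoids $w$, distinct ones from distinct parts) gives $\lvert\{P\in\mathcal P: w\notin P\}\rvert\ge c$ for every $w$, and the incidence count $\sum_{P}\lvert P\rvert=2\lvert\mathcal P\rvert$ then yields $\lvert\mathcal P\rvert\ge\lceil 10/3\rceil=4$ and $\lvert\mathcal P\rvert\ge\lceil 15/2\rceil=8$ respectively. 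Your lifting claim is sound (the only point needing care in case (i) is that a $2$-split of a $4$-set has no distinguished $2$-part, but since distinct pair-partitions of a $4$-set share no block, distinct $2$-splits of $\mathcal S|_Z$ still force distinct parts $P\in\mathcal P$, which is all the count needs). What your approach buys is uniformity and the elimination of the paper's case analysis in (ii); what the paper's approach buys is that its intermediate objects (the element $x_0$ in three small parts, the explicit pairs of splits) are reused conceptually in the surrounding arguments of Section 4, whereas your averaging bound is self-contained.
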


\begin{proof}
(i) Suppose that $\mathcal S$ 4-dices $X$ and that $|X|\geq 5$.
Let $Y=\{x,y,z,t,u\}\in {X \choose 5}$ and $Y'=\{x,y,z,t\}\in {X \choose 
4}$.
Since $\mathcal S$ 4-dices $X$, $\mathcal S|_{Y'}$ contains at least two 
2-splits $S'_1$ and $S'_2$. Hence, $\mathcal S|_Y$ contains two 
splits $S_1$ and $S_2$ such that
$S_1|_{Y'}=S'_1$ and $S_2|_{Y'}=S'_2$. 
Moreover, since $S'_1$ and $S'_2$ are both 2-splits on $Y'$, the part 
$A_1$ of $S_1$ and 
$A_2$ of $S_2$ of size 2 does
not contain $u$. Note that $A_1$ and $A_2$ must be parts of $S'_1$ and $S'_2$, 
respectively. In particular, since $S'_1$ and $S'_2$ are splits on  
$Y'$ and $S'_1\neq S'_2$ it follows that $|A_1 \cap A_2|=1$. Without loss of 
generality, 
we may assume that $A_1 \cap A_2=\{x\}$.
Replacing $Y'$ by $Y''=\{y,z,t,u\}$ and using an analogous argument implies that  $\mathcal 
S|_Y$ also contains two distinct 2-splits on $Y$, call them $S_3$ and $S_4$, whose 
parts of size $2$ do not contain $x$. In particular, 
$S_3$ and $S_4$ are distinct from $S_1$ and $S_2$. 
In summary, $\mathcal S|_Y$ contains at least four distinct 2-splits.

(ii)  Suppose that $\mathcal S$ 5-dices $X$ and that $|X|\geq 6$.
Let $Y \in {X \choose 6}$. If  $\mathcal S|_Y$ contains a 3-split
we are done. Hence, assume $\mathcal S|_Y$ does not contain a 3-split.
Since $|Y|=6$ it follows  that a split in $\mathcal S|_Y$ must be trivial or a
 2-split. We continue with showing that $\mathcal S|_Y$
contains at least eight 2-splits. 
Let $x\in Y$. Since a split in $\mathcal S|_Y$ is either trivial  or a
	2-splits, all 2-splits of $\mathcal S|_{Y-\{x\}}$ 
correspond to the 2-splits of $\mathcal S|_Y$ whose
small part does not contain $x$. 
We claim that there exists an element $x_0$ of $Y-\{x\}$ that belongs to the small
part of at least three 2-splits of $Y$.
To see this, we consider the following two cases: (a)
 $\mathcal S|_Y$ does not contain a split whose small part contains $x$
  and (b)  $\mathcal S|_Y$ contains a split whose small part contains 
  $x$.

In case of (a), let $Y' =
	\{x,y,a,b,c\}$ be a subset of $Y$ of size $5$.
	Since $\mathcal S$ 5-dices $X$, it follows that $\mathcal S|_{Y'}$ contains at least five
	 of the ${4\choose 2}=6$ possible 2-splits in
	$\{ya|\overline{ya}, yb|\overline{yb}, yc|\overline{yc}, ab|\overline{ab}, ac|\overline{ac},$ 
	$bc|\overline{bc}\}$
	 that might be contained in
	$\mathcal S|_Y$ and do not have $x$ in their small part. It is now straight-forward to
	verify that there is some $x_0 \in
	Y'-\{x\}$ such that $\mathcal S|_{Y'}$ contains three 2-splits whose small
	part contains $x_0$.

Consider now Case (b). Since
	$\mathcal S$ 5-dices $X$ and $|X|\geq 6$, $\mathcal S|_{Y-\{x\}}$ contains 
	again at least five 
	2-splits. Then if there exists an element $x_0 \in
	Y-\{x\}$ such that $\mathcal S|_{Y-\{x\}}$ contains three 2-splits whose
	small part contains $x_0$ then the claim follows. If
	this is not the case, then consideration of all ${5\choose 2}=10$ possible 2-splits in
		$\mathcal S|_{Y-\{x\}}$ shows that $\mathcal S|_{Y-\{x\}}$ must contain exactly five
	2-splits and that all elements of $Y-\{x\}$ must belong to the small part of exactly two
	2-splits of $\mathcal S|_{Y-\{x\}}$. In addition, by assumption on $x$,
	there exists an element $x_0$ of $Y-\{x\}$ such that $\{x,x_0\}$ is the small
	part of a split of $\mathcal S|_Y$. Since $x_0$ also belongs to the small part of
	exactly two 2-splits in $\mathcal S|_{Y-\{x\}}$, it follows that $x_0$
	belongs to the small part of exactly three 2-splits of $\mathcal
	S|_{Y-\{x\}}$. 
	Hence, there is some $x_0 \in
	Y'-\{x\}$ such that $\mathcal S_{Y'}$ contains three 2-splits whose small
	part contains $x_0$.

In summary, in both Case~(a) and (b), there is some $x_0 \in Y'-\{x\}$ such that $\mathcal 
S|_{Y'}$ contains three 2-splits whose small part contains $x_0$.
Moreover,  $\mathcal S|_{Y-\{x_0\}}$ contains at least five 2-splits
because $\mathcal S$ 5-dices $X$ and $|Y|=6$.
Since the small part of a split in $\mathcal S|_{Y-\{x_0\}}$ is also the small part of a split in 
$\mathcal S|_Y$ whose small part does not contain $x_0$, it follows that there also exists at least five 
2-splits in $\mathcal S|_Y$ whose small part does not contain $x_0$. 
Hence, 
$\mathcal S|_Y$ contains at least eight 2-splits.
\end{proof}

To prove the main theorem of this section, we require a further result 
concerning dicing.

\begin{proposition}\label{prop45d}
Suppose $\mathcal S$ is a split system on $X$ with $|X|\geq 4$. Then the following holds.
\begin{itemize}
\item[(i)] $\mathcal S$ 4-dices $X$ if and only if for all
$A,B \in {X \choose 3}$ 
with $|A \cap B|=2$, we have $\phi_A \neq \phi_B$. 
\item[(ii)] If $|X|\geq 5$ then $\mathcal S$ 4- and 5-dices $X$ if and only if for all 
distinct $A,B \in {X \choose 3}$ 
with $A \cap B \neq \emptyset$, we have $\phi_A \neq \phi_B$.
\end{itemize}
\end{proposition}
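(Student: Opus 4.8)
The plan is to convert the hypothesis ``$\phi_A\neq\phi_B$'' into a statement about the $2$-splits that $\mathcal S$ induces on the small set $A\cup B$ via Lemma~\ref{cases}, and then to read off the dicing conditions.

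For part~(i), fix distinct $A,B\in\binom{X}{3}$ with $|A\cap B|=2$ and put $Y:=A\cup B\in\binom{X}{4}$; conversely, every pair of distinct $3$-subsets of a fixed $4$-set is of this form. Writing $A\cap B=\{x,y\}$, $A=\{x,y,z\}$, $B=\{x,y,p\}$, Lemma~\ref{cases}(i) tells us that $\phi_A\neq\phi_B$ exactly when $\mathcal S|_Y$ contains $xz|yp$ or $yz|xp$, i.e.\ one of the two $2$-splits of $Y$ different from $\{z,p\}|\{x,y\}$. Since a $4$-set has exactly three $2$-splits and, as $\{z,p\}$ runs over the $\binom{4}{2}=6$ two-element subsets of $Y$, each $2$-split of $Y$ occurs as the ``forbidden'' split $\{z,p\}|\{x,y\}$ for exactly two of the six pairs $\{A,B\}$, it follows that ``$\phi_A\neq\phi_B$ for all pairs of $3$-subsets of $Y$'' is equivalent to ``for every $2$-split $T$ of $Y$, at least one of the other two $2$-splits of $Y$ lies in $\mathcal S|_Y$'', which in turn is equivalent to ``$\mathcal S|_Y$ contains at least two $2$-splits''. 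Quantifying over all $Y\in\binom{X}{4}$ gives exactly the $4$-dicing property, which proves~(i).

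For part~(ii), a distinct pair $A,B$ with $A\cap B\neq\emptyset$ satisfies $|A\cap B|\in\{1,2\}$, so the right-hand side is the conjunction of the condition in~(i) (equivalent to $4$-dicing by~(i)) with ``$\phi_A\neq\phi_B$ whenever $|A\cap B|=1$''. Since both sides of the stated equivalence contain ``$\mathcal S$ $4$-dices $X$'', it suffices to prove that, under the assumption that $\mathcal S$ $4$-dices $X$, this second condition is equivalent to $5$-dicing. For a pair with $|A\cap B|=1$ write $A=\{x,y,z\}$, $B=\{x,p,q\}$ and $Y':=A\cup B\in\binom{X}{5}$; Lemma~\ref{cases}(ii) then says that $\phi_A\neq\phi_B$ iff $\mathcal S|_{Y'}$ contains a $2$-split whose small part is one of $\{y,z\},\{p,q\},\{x,y\},\{x,z\},\{x,p\},\{x,q\}$. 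Writing $\mathcal T$ for the set of small parts of the $2$-splits of $\mathcal S|_{Y'}$, viewed as a graph on vertex set $Y'$, this says: $\phi_A=\phi_B$ iff $\mathcal T$ is contained in the complete bipartite graph with sides $\{y,z\}$ and $\{p,q\}$ (a $4$-cycle that avoids $x$). Since the pairs $A,B$ with $|A\cap B|=1$ and $A\cup B=Y'$ are precisely the choices of a vertex $x\in Y'$ together with a splitting of $Y'\setminus\{x\}$ into two pairs, ``$\phi_A\neq\phi_B$ whenever $|A\cap B|=1$'' amounts to: for every $Y'\in\binom{X}{5}$, the graph $\mathcal T$ is \emph{not} a subgraph of any $4$-cycle spanning four of the five vertices of $Y'$.

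One direction is immediate: if $\mathcal S$ $5$-dices $X$ then $|\mathcal T|\geq5$ for every $Y'$, so $\mathcal T$ cannot lie inside a $4$-edge graph and the condition holds. For the converse I would argue by contradiction: assume $\mathcal S$ $4$-dices $X$ and the condition holds but $5$-dicing fails, so $|\mathcal T|\leq4$ for some $Y'$. First I would record what $4$-dicing says about $\mathcal T$: a $2$-split of a $4$-set $Y'\setminus\{w\}$ is a splitting of it into two pairs, and it is induced by $\mathcal S$ iff $\mathcal T$ contains one of those two pairs; so $4$-dicing says that, for each $w\in Y'$, the edges of $\mathcal T$ that avoid $w$ meet at least two of the three perfect matchings of the $4$-set $Y'\setminus\{w\}$. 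By Lemma~\ref{lm-up}(i) we have $|\mathcal T|\geq4$, hence $|\mathcal T|=4$; moreover a vertex of $\mathcal T$ of degree $\geq3$ would leave at most one edge of $\mathcal T$ avoiding it, meeting at most one perfect matching and contradicting $4$-dicing, so $\mathcal T$ has maximum degree at most $2$. A four-edge graph on five vertices with maximum degree at most $2$ is a $4$-cycle plus an isolated vertex, a path through all five vertices, or a triangle plus a disjoint edge; in the latter two cases, deleting the middle vertex of the path (respectively a vertex of the triangle) leaves a $4$-set on which $\mathcal T$ induces only a single $2$-split, contradicting $4$-dicing. Hence $\mathcal T$ is a $4$-cycle plus an isolated vertex $x$, and the splitting of $Y'\setminus\{x\}$ into the two sides of that bipartite $4$-cycle yields a pair $A,B$ with $|A\cap B|=1$ and $\phi_A=\phi_B$ --- the required contradiction. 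The step I expect to be the main obstacle is precisely this structural claim, that a four-element $\mathcal T$ consistent with $4$-dicing must be a $4$-cycle plus an isolated vertex; the subtlety to watch is that in a $4$-set a $2$-split is recorded by \emph{both} of its parts, so the number of induced $2$-splits is not simply the number of edges of $\mathcal T$ avoiding the deleted vertex, and this must be handled carefully when turning $4$-dicing into a constraint on $\mathcal T$.
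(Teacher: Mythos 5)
Your proof is correct and follows essentially the same route as the paper: both directions rest on Lemma~\ref{cases}(i)--(ii) and Lemma~\ref{lm-up}(i), and your converse for (ii) — ruling out a degree-$3$ vertex via $4$-dicing, eliminating the path and triangle-plus-edge configurations by exhibiting a $4$-set carrying only one induced $2$-split, and recognising the $4$-cycle $K_{2,2}$ as the unique surviving obstruction forcing $\phi_A=\phi_B$ — is the same case analysis the paper performs via multiplicities in the multiset of small parts. Your packaging of part (i) as a counting equivalence over all six pairs, and your explicit handling of the two-edges-one-split subtlety when restricting to a $4$-set, are minor (and clean) variations rather than a different argument.
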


\begin{proof}
(i) Let $A=\{x,y,z\}$ and $B=\{x,y,t\}$ be subsets of $X$ and let $Y=A \cup B$.
Assume first that $\mathcal S$ 4-dices $X$. Then $\mathcal S|_Y$ contains at
least two 2-splits because $|Y|=4$. In particular, $\mathcal S|_Y$ contains at least one 2-split
$S$ distinct from $xy|tz$. By Lemma~\ref{cases}(i), it follows that 
$\phi_A(S) \neq \phi_B(S)$. Consequently, $\phi_A \neq \phi_B$.

Conversely, if $\phi_A \neq \phi_B$, then there exists a split 
$S$ in $\mathcal S$ such that $\phi_A(S) \neq \phi_B(S)$. By Lemma~\ref{cases}(i), 
$S|_Y \in  \{xz|yt,yz|xt\}$. If $S|_Y = xz|yt$, then consider the set  
$C=\{x,z,t\}$.
Since, by assumption, $\phi_A \neq \phi_C$ there must exist a split $S'$ in 
$\mathcal S$ such $\phi_A(S') \neq \phi_C(S')$.
By Lemma~\ref{cases}(i) it follows that $S'|_Y \neq S|_Y$. If $S|_Y = yz|xt$ then an
analogous argument with $C$ replaced by  $D=\{y,z,t\}$ implies that there exists 
a split $S''$ with $S''|_Y \in  \{yx|zt,yt|zx\}$. Lemma~\ref{cases}(i) implies again that
$S|_Y \neq S''|_Y$. Hence, 
$\mathcal S|_Y$ contains at least two 2-splits one of which is $S|_Y$ and 
the other is $S'|_Y$ or $S''|_Y$.

(ii) Assume first that $\mathcal S$ 4-dices and 5-dices $X$. 
Let $A,B\in \binom{X}{3}$ distinct such that $A\cap B\not=\emptyset$. If $|A\cap B|=2$ then, by 
		Proposition~\ref{prop45d}(i), $\phi_A \neq 
		\phi_B$ must hold. So assume that $|A\cap B|\not=2$.
		Let $A=\{x,y,z\}$ and $B=\{x,p,q\}$. Then $|A\cap B|=1$. 
		Since $\mathcal S$ 5-dices $X$ and $|X|\geq 5$ it follows that $\mathcal S|_Y$ 
contains at least five 2-splits where $Y=A\cup B$. Since there are exactly ${10\choose 2}$ 2-splits on $Y$, 
it follows that $\mathcal S|_Y$ contains at least one of the six 2-splits
in $\{yz|xpq, pq|xyz, xy|zpq, xz|ypq, xp|yzq,xq|yzp\}$. 
By Lemma~\ref{cases}(ii), it follows that $\phi_A \neq \phi_B$.

Conversely, assume that for all distinct $A,B \in {X \choose 3}$ 
with $A \cap B \neq \emptyset$  we have  that $\phi_A \neq \phi_B$.
If $|A\cap B|=2$ then $\mathcal S$ 4-dices $X$ in view of Proposition~\ref{prop45d}(i).
To 
see that $\mathcal S$ also 5-dices $X$, we need to show in view of $|X|\geq 5$ that for all $Y\in \binom{X}{5}$ the split system $\mathcal S|_Y$ contains at 
least five 2-splits. 

Let $Y\in \binom{X}{5}$.
Since $\mathcal S$ 4-dices $X$, 
it follows by Lemma~\ref{lm-up}(i) that $\mathcal S|_Y$ contains at least four 2-splits. 
Assume  for contradiction that $\mathcal S|_Y$ contains 
precisely four 2-splits $S_1, \ldots, S_4$. For all $1\leq i\leq 4$, let $A_i$ denote the small part of $S_i$. 
Then the multiset $\mathcal A=A_1 \cup A_2 \cup A_3 \cup A_4$ 
contains eight elements. We claim that there exists no element $x\in Y$ with multiplicity three or more in $\mathcal A$. 
To see the claim, assume for contradiction that there exists some $x \in X$ that is 
contained in three of $A_i$, $1\leq i\leq 4$. Since, for all $1 \leq i \leq 4$, the split 
$S_i|_{Y-\{x\}}$ is a 2-split of $Y-\{x\}$ if and only if $x\not\in A_i$ it follows that
 $\mathcal S|_{Y-\{x\}}$ contains at most one 2-split. But this is not possible because $\mathcal S$ 4-dices $X$
 and $|X|\geq 5$ thereby concluding the proof of the claim.
Hence, every element of $Y$ has multiplicity at most two in $\mathcal A$. Since $Y$ contains five 
elements and $\mathcal A$ has size eight, one of the following two cases must hold: 
(a) three elements of $Y$ have multiplicity two 
in $\mathcal A$ and the other two have multiplicity one and (b) 
four elements of $Y$ have multiplicity two in $\mathcal A$ and one does not appear in $\mathcal A$.

Suppose first that Case~(a) holds. Let $x$ and $y$ be the two elements in $\mathcal A$ that appear
only once. Then there exists an element $q\in Y- \{x,y\}$
such that neither $\{x,q\}$ nor $\{y,q\}$ is contained in $\{A_1,A_2,A_3,A_4\}$. 
Since $q$ has multiplicity two in $\mathcal A$ while $x$ and $y$ have 
multiplicity one each, this implies that there exist $i,j\in\{1,\ldots, 4\}$ distinct such that
the two sets $A_i$ and $A_j$ not containing 
$q$ satisfy $A_i \cup A_j=\{x,y,z,p\}$. It
follows that $\mathcal S|_{A_i\cup A_j}$ only contains the split $A_i|A_j$, 
contradicting the fact that $\mathcal S$ 4-dices $X$.

Suppose now that Case~(b) holds. Let $x$ be the element of $Y$ 
not present in $\mathcal A$. Since each element of $\{y,z,p,q\}$ appears 
twice in $\mathcal A$, it follows that, up to potentially having to relabel  the elements 
of $Y-\{x\}$, $S|_Y=\{yp|xzq, yq|xzp, zp|xyq, zq|xyp\}$ We can now use 
Lemma~\ref{cases}(ii) to conclude that $\phi_A=\phi_B$, which contradicts our assumption
that $\phi_A\not=\phi_B$.
\end{proof}

Note that the assumption that $\mathcal S$ 4-dices $X$ is necessary
for the characterization in Proposition~\ref{prop45d}~(ii) to hold.
For example, the
split system on $X=\{1, \ldots 5\}$ whose set of non-trivial splits equals
\[\{12|345, 23|451, 34|512, 45|123\}\]
does not 5-dice $X$ but $\phi_A \neq \phi_B$ 
holds for all $A,B \in {X \choose 3}$ with $|A \cap B|=1$.

We now show that injectivity of a split system can be characterized by 
considering at most 6-points.

\begin{theorem}\label{dices}
	Suppose $\mathcal S$ is a split system on $X$, $|X| \geq 3$.
	Then $\mathcal S$ is injective if and only if $\mathcal S$ 4-, 5- and 6-dices $X$.
\end{theorem}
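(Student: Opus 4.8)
The plan is to prove the two directions separately, using the $\phi_Y$-description from Lemma~\ref{medinbu} throughout and the already-established case analysis of Lemma~\ref{cases} together with Proposition~\ref{prop45d}.

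\medskip

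For the forward direction, suppose $\mathcal S$ is injective. Then in particular $\phi_A\neq\phi_B$ for all distinct $A,B\in\binom{X}{3}$, so a fortiori this holds whenever $A\cap B\neq\emptyset$; by Proposition~\ref{prop45d}(i) and (ii), $\mathcal S$ therefore 4-dices and 5-dices $X$ (when $|X|<4$ or $|X|<5$ the respective dicing conditions hold vacuously, so there is nothing to check). It remains to show $\mathcal S$ 6-dices $X$. Assume $|X|\geq 6$ and take any $Y\in\binom{X}{6}$; write $Y=\{x,y,z,p,q,r\}$ and apply injectivity to the disjoint triples $A=\{x,y,z\}$ and $B=\{p,q,r\}$. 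Since $\phi_A\neq\phi_B$, there is a split $S\in\mathcal S$ with $\phi_A(S)\neq\phi_B(S)$, and Lemma~\ref{cases}(iii) tells us that $S|_Y$ is either a 3-split or a 2-split whose part of size $2$ lies inside $\{x,y,z\}$ or $\{p,q,r\}$. In the first case $\mathcal S|_Y$ contains a 3-split and we are done. In the second case, say the size-$2$ part is contained in $\{x,y,z\}$; running the same argument over all three ways of pairing a fixed partition $\{x,y,z\}\mid\{p,q,r\}$ — more precisely, applying injectivity to each of the complementary triples obtained by swapping one element across — forces $\mathcal S|_Y$ to contain enough $2$-splits with size-$2$ part inside a common triple to produce a triangle of $2$-splits. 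The bookkeeping here is the one genuinely fiddly part: one must check that, for every disjoint bipartition of $Y$ into two triples, injectivity applied to all relevant triple-pairs yields either a $3$-split or the full triangle, and rule out the "mixed" configurations that would leave us with only two $2$-splits per triple.

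\medskip

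For the converse, assume $\mathcal S$ 4-, 5- and 6-dices $X$; we must show $\phi_A\neq\phi_B$ for every pair of distinct $A,B\in\binom{X}{3}$. Split into cases by $|A\cap B|$. If $|A\cap B|\in\{1,2\}$ then (since $4$- and $5$-dicing hold) Proposition~\ref{prop45d}(ii) — or part (i) when $|A\cap B|=2$ and $|X|=4$ — gives $\phi_A\neq\phi_B$ directly. The remaining case is $A\cap B=\emptyset$, so $|X|\geq 6$; write $Y=A\cup B$, $|Y|=6$. Since $\mathcal S$ 6-dices $X$, $\mathcal S|_Y$ contains a $3$-split or a triangle of $2$-splits. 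If $\mathcal S|_Y$ has a $3$-split $S|_Y$, then by Lemma~\ref{cases}(iii) (the "conversely" direction) $\phi_A(S)\neq\phi_B(S)$, hence $\phi_A\neq\phi_B$. If instead $\mathcal S|_Y$ contains a triangle of $2$-splits on some $\{x,y,z\}\subseteq Y$, then at least one of those three $2$-splits has its size-$2$ part contained in $A$ or in $B$ (a triangle on three points cannot have all three pairs straddling the bipartition $A\mid B$ of a six-set, since $A$ and $B$ each have size $3$ — by pigeonhole two of $x,y,z$ lie in the same part), and again Lemma~\ref{cases}(iii) yields $\phi_A(S)\neq\phi_B(S)$.

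\medskip

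The step I expect to be the real obstacle is the forward direction's $6$-dicing argument: translating "$\phi_A\neq\phi_B$ for every pair of disjoint triples in $Y$" into the clean alternative "a $3$-split or a triangle of $2$-splits" requires carefully enumerating which families of $2$-splits on a $6$-set are simultaneously compatible with all the inequalities forced by injectivity, and showing that the only way to avoid a $3$-split is to have a monochromatic triangle. I would handle this by fixing one disjoint bipartition $\{x,y,z\}\mid\{p,q,r\}$, listing via Lemma~\ref{cases}(iii) exactly which $S|_Y$ can certify $\phi_{xyz}\neq\phi_{pqr}$, and then — assuming no $3$-split is present — tracking how the $2$-splits certifying the various triple-pairs must overlap, concluding that three of them form a triangle. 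The other two directions are essentially immediate given Propositions~\ref{prop45d} and Lemma~\ref{cases}, with only the minor pigeonhole observation in the last paragraph needed to finish.
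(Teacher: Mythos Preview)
Your converse direction and your forward argument for 4- and 5-dicing match the paper essentially line for line (your pigeonhole observation for the triangle case is the detail the paper leaves implicit when it says ``in both cases, we can use Lemma~\ref{cases}(iii)'').

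The gap is in your forward 6-dicing step. You propose to extract a triangle of 2-splits from injectivity applied only to \emph{disjoint} triples in $Y$, via Lemma~\ref{cases}(iii). That is not enough. Concretely: assuming $\mathcal S|_Y$ has no 3-split, encode its 2-splits as a graph $G$ on the six vertices of $Y$. What disjoint-triple injectivity gives you is exactly that, for every partition of $Y$ into two triples $A\mid B$, the graph $G$ has an edge inside $A$ or inside $B$. But the graph $C_5+K_1$ (a 5-cycle on five of the vertices with the sixth isolated) satisfies this condition for all ten bipartitions --- you can check each one by hand --- yet it is triangle-free. So no amount of ``tracking how the 2-splits certifying the various triple-pairs must overlap'' will produce a triangle from this hypothesis alone; the enumeration you anticipate being ``fiddly'' is in fact impossible.

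What rescues the argument, and what the paper does, is to feed the already-established 5-dicing back in via Lemma~\ref{lm-up}(ii): if there is no 3-split then $\mathcal S|_Y$ has at least eight 2-splits. Triangle-free graphs on six vertices with at least eight edges are (up to relabelling) either $K_{2,4}$ or a subgraph of $K_{3,3}$; the first is ruled out because it fails 5-dicing on a 5-subset, and the second is ruled out because Lemma~\ref{cases}(iii) then gives $\phi_A=\phi_B$ for a specific disjoint pair, contradicting injectivity. So the missing idea is not more bookkeeping over disjoint triples but rather the edge-count coming from Lemma~\ref{lm-up}(ii), which in turn rests on the overlapping-triple part of injectivity you had already secured.
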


\begin{proof}
If $|X|=3$, then the equivalence trivially holds. Hence, we may assume for the following that $|X| \geq 4$.

Assume first that $\mathcal S$ 4-, 5- and 6- dices $X$, and let $A,B \in {X \choose 3}$ distinct. 
If $|X|=4$, then $|A \cap B|=2$. In that case, Proposition~\ref{prop45d}(i) implies that 
$\phi_A \neq \phi_B$. If $|X|=5$, then $A \cap B \neq \emptyset$. In that case, 
Proposition~\ref{prop45d}(ii) implies that $\phi_A \neq \phi_B$. Finally, suppose that $|X| \geq 6$. In view of 
Proposition~\ref{prop45d}(ii), we have that $\phi_A \neq \phi_B$ 
holds in case $A \cap B \neq \emptyset$. 
It remains to show that $\phi_A \neq \phi_B$ also holds when $A \cap B=\emptyset$. To see this, let $A=\{x,y,z\}$ and $B=\{t,u,v\}$ be 
subsets of $X$ such that $A\cap B=\emptyset$. 
Let $Y=A \cup B$. Since $|X|\geq 6$ and $\mathcal S$ 6-dices $X$, the split system $\mathcal S|_Y$ 
contains either a 3-split or a triangle of 2-splits. In both cases, we can use 
Lemma~\ref{cases}(iii) to conclude that $\phi_A \neq \phi_B$.

Conversely, assume that $\mathcal S$ is injective. Then $\phi_A \neq \phi_B$ 
for all distinct $A, B \in {X \choose 3}$  with $A \cap B \neq \emptyset$. By 
Proposition~\ref{prop45d}(ii), it follows that $\mathcal S$ 4-dices and 
5-dices $X$. To see that $\mathcal S$ also 6-dices $X$, suppose that $|X| \geq 6$ and let 
$Y=\{x,y,z,t,u,v\}$ be a subset of $X$ of size 6.
 Since $\mathcal S$ 5-dices $X$ Lemma~\ref{lm-up} implies
 that $\mathcal S|_Y$ contains either a 3-split or at least eight 2-splits.
We claim that if $\mathcal S|_Y$ does not contain a 3-split
then $\mathcal S|_Y$ must contain a triangle of 2-splits. To see the claim, we remark first that 
if $\mathcal S|_Y$ contains ten 2-splits or more, then it must contain a triangle of 2-splits.
Employing a case analysis, we obtain that, up to potentially having to relabel the elements of $Y$, a 
split system on $Y$ containing eight 2-splits or more without containing a 
triangle of 2-splits is either (a) the split system $\mathcal S_1$ whose set of non-trivial splits is
$\{xy|\overline{xy}, xz|\overline{xz}, xt|\overline{xt}, xu|\overline{xu},
yv|\overline{yv}, zv|\overline{zv}, tv|\overline{tv}, uv|\overline{uv}\}$ or (b) 
a subset of the split system $\mathcal S_2$ whose set of non-trivial splits is
$\{xy|\overline{xy}, yz|\overline{yz}, zt|\overline{zt}, tu|\overline{tu},
uv|\overline{uv}, vx|\overline{vx}, xt|\overline{xt}, yu|\overline{yu}, zv|\overline{zv}\}$.
Since $\mathcal S_1$ does not 5-dice $X$ because $\mathcal S_1|_{Y-\{x\}}$ 
contains only four 2-splits it follows that  $\mathcal S|_Y\not=\mathcal S_1$. 
Hence, Case~(a) cannot hold. But Case~(b) cannot hold either since
if $\mathcal S|_Y$ is a subset of $\mathcal S_2$ then
Lemma~\ref{cases}(iii) implies $\phi_{uxz}= \phi_{tvy}$. But this is 
impossible because $\mathcal S|_Y$ is injective. Hence, 
$\mathcal S|_Y$ must contain a triangle of 2-splits, as claimed. Thus,
$\mathcal S$ also 6-dices $X$.
\end{proof}

As an important consequence of the last result, we see that 
injectivity of a split system is well behaved with respect to restriction:

\begin{corollary}\label{sub-inj}
	Suppose $\mathcal S$ is a split system on $X$ with $|X|\geq 3$.
	If $\mathcal S$ is injective, then $\mathcal S|_Y$ is injective, for all $Y \subseteq X$ with $|Y|\geq 3$.
\end{corollary}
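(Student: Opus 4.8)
The plan is to deduce the corollary directly from the dicing characterization in Theorem~\ref{dices}, rather than to argue at the level of Buneman graphs. Working directly with medians would be awkward, since the median of three leaves in $\mathcal B(\mathcal S|_Y)$ need not be a simple restriction of the corresponding median in $\mathcal B(\mathcal S)$; by contrast, the dicing conditions are phrased purely in terms of restrictions of $\mathcal S$ to subsets of size at most $6$, and these interact cleanly with a further restriction. So, by Theorem~\ref{dices}, it suffices to prove the following: if $\mathcal S$ 4-, 5- and 6-dices $X$, then for every $Y \subseteq X$ with $|Y| \geq 3$ the split system $\mathcal S|_Y$ 4-, 5- and 6-dices $Y$.

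The key point I would establish first is that restriction is transitive: for any $Z \subseteq Y \subseteq X$ one has $(\mathcal S|_Y)|_Z = \mathcal S|_Z$, because restricting a split $A|B$ of $X$ first to $Y$ and then to $Z$ yields exactly $A\cap Z \mid B\cap Z$ (and the trivial-split bookkeeping is automatic). Consequently, for $k\in\{4,5,6\}$ and any $Z \in \binom{Y}{k}$, the split system $(\mathcal S|_Y)|_Z$ on $Z$ literally equals $\mathcal S|_Z$, hence has the same $2$-splits, $3$-splits, and triangles of $2$-splits.

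With this in hand the argument is immediate. Fix $Y \subseteq X$ with $|Y|\geq 3$ and $k\in\{4,5,6\}$. If $|Y|<k$, then $\mathcal S|_Y$ $k$-dices $Y$ by definition. Otherwise, take an arbitrary $Z \in \binom{Y}{k}$; since $Z \in \binom{X}{k}$ and $\mathcal S$ $k$-dices $X$, the split system $\mathcal S|_Z$ satisfies the corresponding requirement (at least two $2$-splits for $k=4$, at least five $2$-splits for $k=5$, a $3$-split or a triangle of $2$-splits for $k=6$), and by the transitivity observation $(\mathcal S|_Y)|_Z = \mathcal S|_Z$ satisfies the same requirement. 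As $Z$ was arbitrary, $\mathcal S|_Y$ $k$-dices $Y$ for each $k\in\{4,5,6\}$, and Theorem~\ref{dices} then gives that $\mathcal S|_Y$ is injective. I do not expect any genuine obstacle here; the only points requiring a moment's care are the transitivity of restriction and checking that the ``$|X|<k$'' clauses in the definition of $k$-dicing carry over to $Y$, both of which are routine.
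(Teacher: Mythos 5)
Your proposal is correct and follows exactly the paper's own argument: apply Theorem~\ref{dices} to get that $\mathcal S$ 4-, 5- and 6-dices $X$, observe that these dicing conditions pass to $\mathcal S|_Y$, and apply Theorem~\ref{dices} again. The only difference is that you spell out the transitivity of restriction, $(\mathcal S|_Y)|_Z=\mathcal S|_Z$, which the paper leaves implicit.
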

\begin{proof}
Suppose that $Y \subseteq X$  with $|Y|\geq 3$ and that $\mathcal S$ is injective.
Then, by Theorem~\ref{dices}, $\mathcal S$ 4-, 5- and 6-dices $X$.
So  $\mathcal S|_Y$ 4-, 5- and 6-dices $Y$. By Theorem~\ref{dices}, it follows that
$\mathcal S|_Y$ is injective.
\end{proof}

\section{The injective dimension}\label{sec:id}

Recall that the dimension $\dim(\mathcal S)$ of a split system $\mathcal S$
is defined as the dimension of the largest hypercube in
$\mathcal B(\mathcal S)$ or, equivalently, 
the size of the largest incompatible subset of $\mathcal S$.
For $n\ge 3$, we define the {\em injective dimension} $\ID(n)$ of $n$ to be
\begin{equation}
\label{define}
\ID(n) = \min\{ \dim(\mathcal S) \colon \mathcal S \mbox{ is an injective split 
system on } \{1, \ldots, n\} \}.
\end{equation}
Note that since Theorem~\ref{circbu} implies that for all $X$ with $n=|X|\geq 4$ there
exists an injective split system  on $X$, the quantity $\ID(n)$ is well-defined.
We are interested in $\ID(n)$ since its value gives a lower bound for
the number of vertices in the Buneman graph of any injective split system on $X$.
In particular, if $\ID(n)=m$ then the Buneman graph $\mathcal B(\mathcal S)$
of any injective split system $\mathcal S$ on $X$ must contain 
an $m$-cube as a subgraph. Hence, $\mathcal B(\mathcal S)$ must contain at least $2^m$ vertices.

To be able to present some upper and lower bounds for $\ID(n)$ (Theorem~\ref{cor:ID-numbers}),
we first show that $\ID: \mathbb N_{\geq 3} \to \mathbb N$ is a monotone increasing function.

\begin{lemma}\label{thm:inj-dim}
For any two integers $n$ and $m$ with $n \geq m \ge 3$, we have $\ID(n) \geq \ID(m)$.
\end{lemma}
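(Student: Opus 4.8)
The plan is to prove that $\ID(n) \geq \ID(m)$ whenever $n \geq m \geq 3$ by taking an injective split system on $\{1,\dots,n\}$ of minimum dimension and restricting it to an $m$-subset, showing the restriction is still injective and has dimension no larger. Concretely, let $\mathcal S$ be an injective split system on $X = \{1,\dots,n\}$ with $\dim(\mathcal S) = \ID(n)$, and let $Y \subseteq X$ with $|Y| = m$. By Corollary~\ref{sub-inj}, the restriction $\mathcal S|_Y$ is an injective split system on $Y$. Since $|Y| = m$, the definition~\eqref{define} gives $\ID(m) \leq \dim(\mathcal S|_Y)$. So it suffices to show $\dim(\mathcal S|_Y) \leq \dim(\mathcal S)$.

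For the dimension comparison, I would use the characterization of $\dim$ in terms of incompatible subsets of splits (stated in Section~\ref{sec:preli}): $\dim(\mathcal S)$ equals the maximum size of an incompatible subset of $\mathcal S$ (with the convention that it is $1$ when $\mathcal B(\mathcal S)$ is a tree). The key observation is that restriction cannot increase incompatibility structure: if $\mathcal T = \{T_1,\dots,T_k\}$ is a pairwise incompatible collection of splits in $\mathcal S|_Y$, then each $T_i = S_i|_Y$ for some $S_i \in \mathcal S$, and I claim the $S_i$ are pairwise incompatible in $\mathcal S$. Indeed, if $S_i = A|B$ and $S_j = C|D$ restrict to incompatible splits of $Y$, then all four of $(A\cap Y)\cap(C\cap Y)$, $(A\cap Y)\cap(D\cap Y)$, etc., are non-empty, which forces $A\cap C$, $A\cap D$, $B\cap C$, $B\cap D$ all non-empty and $S_i \neq S_j$; hence $S_i, S_j$ are incompatible. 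So any incompatible subset of $\mathcal S|_Y$ of size $k$ lifts to an incompatible subset of $\mathcal S$ of size $k$, giving $\dim(\mathcal S|_Y) \leq \dim(\mathcal S)$. A small amount of care is needed for the edge case where one of the two Buneman graphs is a tree (dimension $1$): if $\mathcal B(\mathcal S|_Y)$ is a tree then $\dim(\mathcal S|_Y) = 1 \leq \dim(\mathcal S)$ trivially since dimensions are at least $1$; if $\mathcal B(\mathcal S|_Y)$ is not a tree it contains a $2$-cube, hence an incompatible pair, which lifts to show $\mathcal B(\mathcal S)$ is not a tree either, and then the incompatible-subset characterization applies cleanly to both sides.

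Putting the two halves together yields $\ID(m) \leq \dim(\mathcal S|_Y) \leq \dim(\mathcal S) = \ID(n)$, which is the claim. The main obstacle, such as it is, is simply being careful that the "lift" $S_i \in \mathcal S$ of a given $T_i \in \mathcal S|_Y$ can be chosen so that distinct $T_i$ lift to distinct $S_i$ — but this is automatic, since incompatible splits of $Y$ are in particular distinct, and distinct splits of $Y$ cannot be restrictions of a single split of $X$. Beyond that, the argument is essentially bookkeeping with the definitions of restriction, incompatibility, and dimension, together with the already-established Corollary~\ref{sub-inj} that does the real work of preserving injectivity.
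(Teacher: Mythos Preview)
Your proof is correct and follows essentially the same approach as the paper: choose an injective split system $\mathcal S$ on an $n$-set realizing $\ID(n)$, restrict to an $m$-subset $Y$, invoke Corollary~\ref{sub-inj} to get injectivity of $\mathcal S|_Y$, and use the fact that incompatible restricted splits lift to incompatible splits in $\mathcal S$ to conclude $\dim(\mathcal S|_Y)\le\dim(\mathcal S)$. Your extra care with the tree edge case and the distinctness of lifts is fine but not strictly necessary, as the paper handles these points implicitly.
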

\begin{proof}
Let $\mathcal S$ be an injective split system on some set $X$ with $|X|=n$ 
such that $\dim(\mathcal S)=\ID(n)$. Let $Y$ be a subset of $X$ of size $m$. By
Corollary~\ref{sub-inj}, the split system $\mathcal S|_Y$ is injective, so 
$\ID(m) \leq \dim(\mathcal S|_Y)$. To see that $\dim(\mathcal S|_Y) \leq 
dim(\mathcal S)$ also holds it suffices to remark that if two splits $S$ and $S'$ in $\mathcal S$ are such that 
$S|_Y$ and $S'|_Y$ are incompatible then $S$ and $S'$ are also incompatible. 
Hence, an incompatible subset of $\mathcal S|_Y$ naturally induces an incompatible subset of $\mathcal S$ of the same size.
It follows 
that $\ID(m) \leq \dim(\mathcal S|_Y) \leq \dim(\mathcal S)=\ID(n)$, as desired.
\end{proof}

We now give upper and lower bounds for $\ID(n)$ where $n=|X|\geq 4$. As we shall 
see in the proof, the upper bound comes from the 
fact that a maximal circular split system on $X$ is injective by Theorem~\ref{circbu}(ii) 
and that in \cite{C08} it was shown that the maximum dimension of a hypercube in $\mathcal B(\mathcal S)$ 
is $\lfloor \frac{n}{2} \rfloor$. Note that 
the split system $\mathcal S$ formed by all splits of $X$ of size two or less is injective by Theorem~\ref{circbu}(i) and, by \cite{C08}, has 
dimension $n-1$. Indeed, two splits $S$ and $S'$ in $\mathcal S$ are incompatible if there exists an 
element $x \in X$ such that $x$ belongs to the small part of both $S$ and $S'$. Hence, the largest incompatible 
subsets of $\mathcal S$ are the subsets of the form $xy|\overline{xy}\,:\, y \in X-\{x\}\}$, some $x \in X$, and these 
subsets have size $n-1$.

\begin{theorem}\label{cor:ID-numbers}
	For all intergers $n \ge 4$, we have $\ID(n) \leq \lfloor \frac{n}{2} 
	\rfloor$.
	Moreover, $\ID(3)=1$, $\ID(4)=\ID(5)=2$, $\ID(6)=\ID(7)=\ID(8)=3$, and for all 
	$n \ge 9$, $\ID(n) \ge 3$.
\end{theorem}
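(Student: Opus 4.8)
The plan is to establish the upper bound first, then determine each small value, and finally bootstrap the lower bound $\ID(n)\ge 3$ for $n\ge 9$ from the $n=8$ case. For the upper bound $\ID(n)\le\lfloor n/2\rfloor$ when $n\ge 4$: take $\mathcal S$ to be a maximal circular split system on $X=\{1,\dots,n\}$. By Theorem~\ref{circbu}(ii) this $\mathcal S$ is injective, so $\ID(n)\le\dim(\mathcal S)$, and by the result of \cite{C08} quoted just before the statement, the largest incompatible subset of a maximal circular split system (equivalently, the largest hypercube in its Buneman graph) has size $\lfloor n/2\rfloor$. This gives $\ID(n)\le\lfloor n/2\rfloor$ immediately, and in particular $\ID(4)=\ID(5)=2$ once we know these are not $1$, and $\ID(6),\ID(7),\ID(8)\le 3$.

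Next I would pin down the small cases. For $\ID(3)=1$: the only split system on a $3$-set consists of the trivial splits, its Buneman graph is a star (a tree), so its dimension is $1$ by convention, and it is injective since $\binom{X}{3}$ is a singleton. For the lower bounds $\ID(4),\ID(5)\ge 2$: a split system of dimension $1$ has $\mathcal B(\mathcal S)$ a phylogenetic tree on $X$, and as noted in the introduction such a tree is never injective once $|X|\ge 4$ (e.g.\ $\med_T(\{1,5,3\})=\med_T(\{1,5,4\})$ whenever two of the three leaves lie in the same component off the relevant path); more precisely, any phylogenetic tree on $\ge 4$ leaves has an interior vertex that is the median of two distinct triples, so no $1$-dimensional split system on $X$ with $|X|\ge 4$ is injective, forcing $\ID(n)\ge 2$ for $n\ge 4$. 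Combined with the upper bound this yields $\ID(4)=\ID(5)=2$. For the lower bound $\ID(6)=\ID(7)=\ID(8)=3$ it suffices, by monotonicity (Lemma~\ref{thm:inj-dim}), to prove $\ID(6)\ge 3$, i.e.\ that no $2$-dimensional injective split system exists on a $6$-set. Here I would argue via Theorem~\ref{dices}: an injective split system on a $6$-set must $6$-dice $X$, hence $\mathcal S=\mathcal S|_X$ must contain a $3$-split or a triangle of $2$-splits. A triangle of $2$-splits $\{xy|\overline{xy},xz|\overline{xz},yz|\overline{yz}\}$ is an incompatible set of size $3$, and a single $3$-split $A|B$ on a $6$-set together with the fact that $\mathcal S$ must also $4$- and $5$-dice $X$ forces additional incompatible splits crossing it; a short case analysis shows that in either case $\dim(\mathcal S)\ge 3$. (One checks directly that a $3$-split $abc|def$ cannot be the only "large" split: $4$-dicing of $\{a,b,d,e\}$, $\{a,b,d,f\}$, etc., produces $2$-splits each incompatible with $abc|def$, and among these one finds two that are also mutually incompatible.)

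Finally, for $n\ge 9$ we get $\ID(n)\ge\ID(8)=3$ directly from monotonicity (Lemma~\ref{thm:inj-dim}), since $\ID$ is non-decreasing and $\ID(8)=3$. Assembling the pieces gives all the claimed equalities and the bound.

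The main obstacle I anticipate is the $\ID(6)\ge 3$ step: ruling out every $2$-dimensional injective split system on a $6$-set. The clean way is via the $6$-dicing criterion of Theorem~\ref{dices}, reducing to showing that any split system on a $6$-set that contains either a $3$-split or a triangle of $2$-splits — and that also $4$- and $5$-dices the set, as forced by injectivity — necessarily has an incompatible triple. The triangle-of-$2$-splits case is immediate; the $3$-split case requires a finite but slightly fiddly case check using the $4$-dicing condition on suitable $4$-subsets to exhibit $2$-splits incompatible with the $3$-split and with each other. Everything else is bookkeeping with results already in hand.
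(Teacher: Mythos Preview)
Your overall architecture matches the paper's, but there is one genuine gap. You claim that the upper bound $\ID(n)\le\lfloor n/2\rfloor$ gives $\ID(8)\le 3$, and then combine this with monotonicity from $\ID(6)\ge 3$ to conclude $\ID(8)=3$. But $\lfloor 8/2\rfloor=4$, so the circular bound only yields $\ID(8)\le 4$; nothing in your plan separates $\ID(8)$ from $4$. The paper closes this gap by exhibiting an explicit split system on $\{1,\dots,8\}$ with $17$ non-trivial splits (two $4$-splits, four $3$-splits, and eleven $2$-splits), checking by computer via the dicing criterion that it is injective, and verifying directly that its dimension is $3$. Without such an example (or some other argument specific to $n=8$), the equality $\ID(8)=3$ is not established. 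The claim $\ID(n)\ge 3$ for $n\ge 9$ is unaffected, since that only needs $\ID(6)\ge 3$ and monotonicity.

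A smaller point: your sketch for $\ID(6)\ge 3$ is essentially the paper's, but the single-$3$-split subcase is more delicate than you suggest. Saying that $4$-dicing of various $4$-sets produces $2$-splits incompatible with $abc|def$, ``and among these one finds two that are also mutually incompatible'', is exactly the step that needs work. The paper handles it by splitting further: if some element lies in the small part of three $2$-splits, those three are already pairwise incompatible; otherwise each element lies in at most two small parts, and an exhaustive check shows that (up to relabelling within $\{1,2,3\}$) the only injective system with a single $3$-split and at most six $2$-splits has non-trivial part $\{123|456,\,15|\overline{15},\,16|\overline{16},\,24|\overline{24},\,26|\overline{26},\,34|\overline{34},\,35|\overline{35}\}$, in which $\{123|456,\,15|\overline{15},\,16|\overline{16}\}$ is an incompatible triple. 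Your plan should make room for this finite search rather than assert that two mutually incompatible $2$-splits appear automatically.
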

\begin{proof}
	Let $X=\{1, \ldots, n\}$. To see that the first statement holds, 
	let $\mathcal S$ be a maximal circular split system on $X$. 
	If $n\geq 4$ then Theorem~\ref{circbu}(ii) implies that $\mathcal S$ is injective. 
	Hece, $\ID(n) \leq \dim(\mathcal S)$. By \cite{C08}, the Buneman 
	graph $\mathcal B(\mathcal S')$ of a maximal circular split system $\mathcal S'$ on $X$ contains an  
	$\lfloor \frac{n}{2} \rfloor$-cube, and all other subcubes in $\mathcal B(\mathcal S')$
	have no larger dimension. Hence, $\dim(\mathcal S')=\lfloor \frac{n}{2} \rfloor$.
	Thus, $\ID(n) \leq \lfloor \frac{n}{2} \rfloor$.
	
	To see the remainder of the theorem, note first that $\ID(3)=1$ 
	since, as was mentioned in Section~\ref{sec:c2} already, the unique split system
	on $X$ is injective and $\mathcal B(\mathcal S)$ is a phylogenetic tree on $X$
		
	To see that $\ID(4)=\ID(5)=2$ holds, we first remark that 
	in view of the first statement of the theorem, we have $\ID(4) \leq 2$ 
	and $\ID(5) \leq 2$. Now, let $X$ be such that $n \in \{4,5\}$ and 
	assume for contradiction that there exists an injective split 
	system $\mathcal S$ on $X$ with $\dim(\mathcal S)=1$. In particular, $\mathcal S$ is compatible. Then 
	$\mathcal B(\mathcal S)$ is a phylogenetic tree on $X$
	and has $|\mathcal S|+1$ vertices. Moreover, since 
	a compatible split system on $X$ has at most $2n-3$ elements (see e.g. \cite[Theorem 3.3]{D12}),	
	it follows that $\mathcal B(\mathcal S)$ has at most $2$ 
	internal vertices if $n=4$, and at most $3$ internal 
	vertices if $n=5$. But $\mathcal S$ is injective, so $\mathcal B(\mathcal S)$ 
	must have at least ${4 \choose 3}=4$ internal vertices if $n=4$, 
	and at least ${5 \choose 3}=10$ internal vertices if $n=5$, a contradiction. 
	Hence, $\ID(4)=\ID(5)=2$.
	
 We continue with showing 
	that $\ID(6)\ge 3$ from which it then follows by Lemma~\ref{thm:inj-dim} and the
	first statement of the theorem 
	that $\ID(6)=\ID(7)=3$ and that $\ID(n) \ge 3$, for all $n \ge 8$.
	Suppose that $\mathcal S$ is an injective split system on $X=\{1,\ldots, 6\}$.
	Bearing in mind that, by Theorem~\ref{dices}, $\mathcal S$ 4-, 5- and 6-dices $X$
	we next perform a case analysis on the number of 3-splits in $\mathcal S$.
	If $\mathcal S$ contains three 3-splits or more then $\dim(\mathcal S) \geq 3$ since 
	all 3-splits of $X$ are pairwise incompatible.
	
	If $\mathcal S$ contains two 3-splits, say $123|456$ and $234|561$, 
	then since $\mathcal S$ 4-dices $X$ it follows that there must exist  a split $S\in\mathcal S$ 
	such that $S(2) \neq S(3)$ and $S(5) \neq S(6)$. Since the splits $S$, $123|456$, and $234|561$ are pairwise 
	incompatible, we obtain $\dim(\mathcal S) \geq 3$.
	
	If $\mathcal S$ contains one 3-split, say $123|456$, then one of the following two cases must hold. 
	If there exists an element $x \in X$ and three splits $S_1$, $S_2$, and $S_3$ in $\mathcal S$ containing $x$ 
	in their small part then 
	$\dim(\mathcal S) \geq 3$ because $\{S_1,S_2,S_3\}$ is incompatible. If no such element $x$ exists then 
	$\mathcal S$ contains at most six 2-split. An exhaustive search shows that, up to potentially having to relabel the elements
	in $\{1,2,3\}$, there exists only one such split system that is injective 		i.\,e.\,$\mathcal S$ is the split system
	whose subset of non-trivial splits is the set
	\[
	\{123|456, 15|\overline{15}, 16|\overline{16}, 24|\overline{24}, 26|\overline{26}, 34|\overline{34}, 35|\overline{35}\}.
	\]
	One can then easily verify that $\{123|456,15|\overline{15}, 16|\overline{16}\}$ is incompatible. Hence, $\dim(\mathcal S)\geq 3$ in this case.
	
	Finally, if $\mathcal S$ does not contain a 3-split, then 
	it must contain a triangle of 2-splits because $\mathcal S$ 6-dices $X$. Since the  three splits in such a triangle 
	are pairwise incompatible it follows that $\dim(\mathcal S) \geq 3$. This concludes the proof that $\ID(6)\geq 3$.
	
	To show that $\ID(8) =3$, we employed Theorem~\ref{dices} and used a computer program to verify
	that $\mathcal S$ is the split system whose subset of non-trivial splits is
	\[
	\begin{array}{r l}
	&\{1234|5678,1357|2468,123|\overline{123},246|\overline{246},478|\overline{478},156|\overline{156},12|\overline{12}, 34|\overline{34},56|\overline{56},78|\overline{78},26|\overline{26},\\
	&35|\overline{35},17|\overline{17},48|\overline{48},68|\overline{68},57|\overline{57},23|\overline{23}\}
	\end{array}
	\]
	is injective. Since $\dim(\mathcal S)=3$, it follows that 
	$\ID(8)=3$.
\end{proof}

Note that as $\ID(8)=3$, the upper bound for $\ID(n)$ given
in Theorem~\ref{cor:ID-numbers} is not 
tight even for $n=8$.
In general, it appears to be difficult to find a better upper or lower bounds for 
$\ID(n)$, however in 
the next two sections we shall give improved bounds for two variants of the injective dimension.

\section{The injective 2-split-dimension}\label{sec:id2}

To help better understand the injective dimension of a split system, 
in this section we shall consider a restricted version of this
quantity that is defined as follows.
For $n\geq 3$, let $\mathbb{S}_2(n)$ be the set of all injective split systems
on $X=\{1,\dots, n\}$ whose non-trivial splits all have size 2.
As mentioned in the introduction, we define $\ID_2(n)$ for $n\ge 3$ as
\begin{equation}
\label{eq:define-2}
\ID_2(n) = \min\{ dim(\mathcal S) \colon \mathcal S\in  \mathbb{S}_2(n) \}.
\end{equation}

By Theorem~\ref{circbu}~(i), $\ID_2(n)$ is well-defined. 
Clearly $\ID_2(n) \geq \ID(n)$ and 
equality holds for $n =3,4,5$ since every non-trivial split 
of a set $X$ of size 3, 4, or 5 is a 2-split.
In the main result of this section (Theorem~\ref{id2-lb}), we provide upper and lower
bounds for $\ID_2(n)$. To prove it, we shall use two lemmas.

For $\mathcal S$ a split system on $X$, we denote by $P(\mathcal S)$ the graph 
with vertex set $X$ and with edge set all the pairs $\{x,y\}$ such that $xy|\overline{xy} \in \mathcal S$. 
We also denote the degree of a vertex $x \in X$ in $P(\mathcal S)$ by $\deg_{P(\mathcal S)}(x)$.
If $\mathcal S$ contains only trivial splits and 2-splits then $P(\mathcal S)$ and dicing are related as stated
as in Lemma~\ref{dice2}. We omit its straight-forward proof but remark in passing that Lemma~\ref{dice2}
is a strengthening of  Theorem~\ref{dices} for split systems in $\mathbb{S}_2(n)$, for all $n\geq 3$.

\begin{lemma}\label{dice2}
Let $\mathcal S \in \mathbb{S}_2(|X|)$ be a split system on $X$ with $|X| \ge 3$.
Then, 
\begin{itemize}
\item[$\bullet$] $\mathcal S$ 4-dices $X$ if and only if $|X| \leq 4$ or for all $Y \in {X \choose 4}$, 
the restriction $P(\mathcal S|_Y)$ contains two edges that share a vertex.
\item[$\bullet$] $\mathcal S$ 5-dices $X$ if and only if $|X| \leq 5$ or for all $Y \in {X \choose 5}$, 
 the restriction $P(\mathcal S|_Y)$ contains five edges or more.
\item[$\bullet$] $\mathcal S$ 6-dices $X$ if and only if $|X| \leq 6$ or for all $Y \in {X \choose 6}$, 
the restriction $P(\mathcal S|_Y)$ contains a 3-clique.
\end{itemize}
\end{lemma}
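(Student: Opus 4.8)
The plan is to unwind the definition of $k$-dicing for $k\in\{4,5,6\}$ and translate each condition into the stated property of $P(\mathcal S|_Y)$, using throughout that, since every non-trivial split of $\mathcal S$ has size $2$, the same holds for every non-trivial split of every restriction $\mathcal S|_Y$. First I would record some elementary bookkeeping: for $Y\subseteq X$ and a split $S=A|B\in\mathcal S$ with small part $A$ of size $2$, the restriction $S|_Y$ is non-trivial precisely when $A\subseteq Y$, in which case $S|_Y=A|(Y-A)$. From this: (a) every non-trivial split of $\mathcal S|_Y$ has size at most $2$, so $\mathcal S|_Y$ never contains a $3$-split; (b) if $|Y|\ge 5$ then each $2$-element subset $\{x,y\}$ of $Y$ gives a unique $2$-split $xy|\overline{xy}$ of $Y$, which lies in $\mathcal S|_Y$ if and only if $\{x,y\}\in E(P(\mathcal S|_Y))$, so the number of $2$-splits in $\mathcal S|_Y$ equals $|E(P(\mathcal S|_Y))|$; and (c) if $|Y|=4$, say $Y=\{a,b,c,d\}$, then the $2$-split $ab|cd$ of $Y$ lies in $\mathcal S|_Y$ if and only if $\{a,b\}\in E(P(\mathcal S|_Y))$, which here is equivalent to $\{c,d\}\in E(P(\mathcal S|_Y))$; hence on the $4$-set $Y$ the graph $P(\mathcal S|_Y)$ is a union of perfect matchings of $Y$, with exactly one perfect matching per $2$-split of $\mathcal S|_Y$.

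Given (a)--(c), the second and third bullets should be immediate. For the second, $\mathcal S$ $5$-dices $X$ means (when $|X|\ge 5$) that every $\mathcal S|_Y$ with $Y\in\binom{X}{5}$ has at least five $2$-splits, which by (b) is exactly the assertion that $P(\mathcal S|_Y)$ has at least five edges; when $|X|<5$ both sides hold by the definition of $5$-dicing, and when $|X|=5$ both hold because $\mathcal S\in\mathbb S_2(5)$ is injective and hence $5$-dices $X$ by Theorem~\ref{dices}. For the third, $\mathcal S$ $6$-dices $X$ means (when $|X|\ge 6$) that every $\mathcal S|_Y$ with $Y\in\binom{X}{6}$ contains a $3$-split or a triangle of $2$-splits; by (a) the first alternative never occurs, and by definition a triangle of $2$-splits in $\mathcal S|_Y$ is precisely a $3$-clique of $P(\mathcal S|_Y)$; the boundary cases $|X|\le 6$ are disposed of as in the second bullet.

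The first bullet is the only one needing a short case check, since on a $4$-set a $2$-split corresponds not to a single edge of $P(\mathcal S|_Y)$ but to a complementary pair of edges. Using (c): if $P(\mathcal S|_Y)$ contains two edges sharing a vertex, say $\{a,b\}$ and $\{a,c\}$, then $ab|cd$ and $ac|bd$ are two distinct $2$-splits of $\mathcal S|_Y$; conversely, if no two edges of $P(\mathcal S|_Y)$ share a vertex then $P(\mathcal S|_Y)$ is a matching on the four vertices of $Y$ which, being also a union of perfect matchings of $Y$ by (c), is empty or a single perfect matching, so $\mathcal S|_Y$ has at most one $2$-split. Combined with the definition of $4$-dicing, and with the injectivity of $\mathcal S\in\mathbb S_2(4)$ for the case $|X|=4$, this yields the claimed equivalence.

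I expect the verification of items (a)--(c) to be entirely routine. The one place where care is genuinely needed is the $4$-point analysis behind the first bullet, where one must keep straight the correspondence between $2$-splits of a $4$-set and complementary pairs of edges of $P(\mathcal S|_Y)$, rather than treating a $2$-split as a single edge.
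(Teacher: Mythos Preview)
The paper omits the proof of this lemma, calling it ``straight-forward''; your write-up correctly supplies the details. The bookkeeping in (a)--(c) is accurate, and the translations for the $5$- and $6$-dicing bullets via (b) and the triangle/$3$-clique correspondence are exactly right. Your treatment of the $4$-point case is the one place requiring care, and you handle it properly: on a $4$-set the graph $P(\mathcal S|_Y)$ is a union of perfect matchings, so ``at least two $2$-splits'' translates to ``at least two perfect matchings present'', which is equivalent to the existence of two incident edges.

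One point worth making explicit for the reader: your appeal to injectivity (via Theorem~\ref{dices}) in the boundary cases $|X|=4,5,6$ is not a convenience but a necessity. The lemma as stated uses $\le$ rather than $<$ in the disjuncts, so when $|X|=k$ the right-hand side of the $k$-dicing equivalence is vacuously true, and one genuinely needs that every $\mathcal S\in\mathbb S_2(k)$ $k$-dices $X$. This is exactly where the hypothesis $\mathcal S\in\mathbb S_2(|X|)$ (which builds in injectivity) is used; without it the lemma would be false at those boundary values. Since Theorem~\ref{dices} is established earlier and independently, there is no circularity.
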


In terms of the dimension of a split system in $\mathbb{S}_2(n)$, $n\geq 3$, we also have
the following result.

\begin{lemma}\label{dim2}
Let $\mathcal S  \in \mathbb{S}_2(|X|)$ be a split system on $X$ with $|X| \ge 3$.
Then, 
\begin{itemize}
\item[(i)] If $P(\mathcal S)$ does not contain a 3-clique then 
$\dim(\mathcal S)=\max_{x \in X} \{\deg_{P(\mathcal S)}(x)\}$.
\item[(ii)] If $P(\mathcal S)$ contains a 3-clique 
then $\dim(\mathcal S)=\max \{\max_{x \in X} \{\deg_{P(\mathcal S)}(x)\}, 3\}$.
\end{itemize}
\end{lemma}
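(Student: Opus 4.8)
\textbf{Proof plan for Lemma~\ref{dim2}.}
The plan is to recall that $\dim(\mathcal S)$ equals the size of the largest incompatible subset of $\mathcal S$, and to translate incompatibility of $2$-splits into a statement about the graph $P(\mathcal S)$. The key observation is that for two distinct trivial-or-$2$-splits $S=ab|\overline{ab}$ and $S'=cd|\overline{cd}$ of $X$, with $|X|\ge 5$, $S$ and $S'$ are incompatible precisely when their small parts $\{a,b\}$ and $\{c,d\}$ meet in exactly one element: if $\{a,b\}\cap\{c,d\}=\emptyset$ then $\{a,b\}$ lies inside one part of $S'$, so they are compatible; if $\{a,b\}=\{c,d\}$ they are equal; and if $|\{a,b\}\cap\{c,d\}|=1$, say $b=c$, then $a,b,d$ together with any fifth element witness that all four intersections $A\cap C, A\cap D, B\cap C, B\cap D$ are non-empty (here one uses $|X|\ge 5$ to guarantee $B\cap D\neq\emptyset$). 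Trivial splits are compatible with everything. Hence an incompatible subset of $\mathcal S$ corresponds exactly to a set of edges of $P(\mathcal S)$, pairwise sharing exactly one endpoint. First I would record this correspondence as the backbone of the argument; for $|X|\le 4$ one checks the (few) cases directly, and in any case both sides of the claimed equalities are easily seen to agree there.

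Next I would classify the edge sets of a graph that pairwise meet in exactly one vertex. A standard fact is that such a family of edges is either a \emph{star} (all edges through a common vertex) or a \emph{triangle} (three edges on three vertices); there are no other possibilities, since a fourth configuration would force two edges to be disjoint or equal. So the largest incompatible subset of $\mathcal S$ has size equal to either the maximum degree $\max_{x\in X}\deg_{P(\mathcal S)}(x)$, realised by a maximum star, or $3$, realised by a triangle, whichever is available and larger.

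From here the two cases of the lemma follow immediately. For (i), if $P(\mathcal S)$ has no $3$-clique then no triangle of edges is present, so the largest incompatible subset is a maximum star and $\dim(\mathcal S)=\max_{x\in X}\deg_{P(\mathcal S)}(x)$. For (ii), if $P(\mathcal S)$ contains a $3$-clique then a triangle of three pairwise-incompatible $2$-splits is present, giving an incompatible subset of size $3$; combined with the star bound, the largest incompatible subset has size $\max\{\max_{x\in X}\deg_{P(\mathcal S)}(x),\,3\}$. (Note that when a vertex has degree $\ge 3$ in a graph containing a triangle the maximum-degree term dominates anyway, so the formula is consistent.)

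The main obstacle is the reduction step: verifying cleanly that incompatibility of two non-trivial splits in $\mathbb{S}_2(|X|)$ is equivalent to their small parts sharing exactly one element, and in particular handling the boundary cases $|X|\le 4$, where, for instance, a $2$-split of a $4$-set has a part of size $2$ on \emph{both} sides and the ``small part'' language is ambiguous; there one must argue directly from the definition of incompatibility rather than via the small-part dictionary. Once that dictionary is in place, the combinatorial classification of pairwise-one-intersecting edge families (star or triangle) is routine, and the dimension formulas drop out.
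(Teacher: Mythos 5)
Your argument is correct and follows essentially the same route as the paper's: translate pairwise incompatibility of $2$-splits into edges of $P(\mathcal S)$ meeting in exactly one endpoint, note that such edge families are stars or triangles, and read the dimension off as the larger of the maximum degree and $3$ (when a triangle is present). The only wrinkle --- shared with the paper's own proof, which also waves at it --- is the degenerate case $|X|=3$, where $\dim(\mathcal S)=1$ by the tree convention while $\max_{x}\deg_{P(\mathcal S)}(x)=0$; apart from that boundary issue your small-part dictionary and star/triangle classification coincide with the paper's case analysis of maximal incompatible subsets.
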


\begin{proof}
We prove (i) and (ii) together. For this, put $n=|X|$. If $n=3$ then $P(\mathcal S)$ consists of three isolated vertices. So Assertion~(i) 
holds. Since $\mathcal S$ only contains trivial splits,  it follows that Assertion~(ii) holds vacuously. So assume 
that $n\geq 4$.
Let $\mathcal S \in \mathbb{S}_2(n)$.
Then a maximal incompatible subset $\mathcal S'$ of 
$\mathcal S$ must be of one of the following two types:
\begin{itemize}
\item[(a)] A triangle of 2-splits.
\item[(b)] The set of all 2-splits in $\mathcal S$ containing some $x\in X$ in their small part.
\end{itemize}

To see that these are the only two possible types, it suffices to remark that any subset $\mathcal S'\subseteq \mathcal S$ with 
$|\mathcal S'|\geq 4$ is incompatible if and only if there exists some $x\in X$ such that all splits of $\mathcal S'$ contain  $x$ in their small part.

If $\mathcal S'$ is of Type~(a) then $\mathcal S'$ corresponds to a 3-clique in  $P(\mathcal S)$
and $|\mathcal S'|=3$. If $\mathcal S'$ is of Type~(b) then  $\mathcal S'$ corresponds to the 
set of edges of $P(\mathcal S)$ that are incident with $x$.  Hence, $|\mathcal S'|=\deg_{P(\mathcal S)}(x)\geq 3$. Thus, 
if $P(\mathcal S)$ has a vertex $x$ with $\deg_{P(\mathcal S)}(x)\geq 3$ or if $P(\mathcal S)$ does 
not contain a 3-clique then $\dim(\mathcal S)=\max_{x \in X} \{\deg_{P(\mathcal S)}(x)\}$. 
Otherwise, $\dim(\mathcal S)=3$.
\end{proof}

We now prove the main result of this section.

\begin{theorem}\label{id2-lb}
For all $n \geq 5$,  	
\[
\lfloor \frac{n}{2} \rfloor  \le \ID_2(n) \leq n-3.
\]
\end{theorem}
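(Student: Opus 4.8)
The plan is to prove the two inequalities separately, in both cases using Lemma~\ref{dice2} to rephrase dicing as a statement about the graph $P=P(\mathcal S)$ and Lemma~\ref{dim2} to read off $\dim(\mathcal S)$ from $P$. Throughout I write $G[Y]$ for the subgraph of a graph $G$ induced on a vertex subset $Y$, and I note that for $\mathcal S\in\mathbb{S}_2(n)$ one has $P(\mathcal S|_Y)=P(\mathcal S)[Y]$.

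For the upper bound I would exhibit a single split system of dimension $n-3$. Take $X=\{1,\dots,n\}$ and let $P$ be the complement of the $n$-cycle $1-2-\cdots-n-1$, i.e.\ $\{i,j\}\in E(P)$ precisely when $i,j$ are not cyclically consecutive; let $\mathcal S\in\mathbb{S}_2(n)$ be the split system whose non-trivial splits are exactly the $xy|\overline{xy}$ with $\{x,y\}\in E(P)$, so that $P(\mathcal S)=P$. Since $P$ is $(n-3)$-regular and, for $n\ge 6$, contains a triangle (e.g.\ on $\{1,3,5\}$), Lemma~\ref{dim2} gives $\dim(\mathcal S)=\max(n-3,3)=n-3$ as soon as $\mathcal S$ is injective; for $n=5$ one has $P=C_5$, which is triangle-free, so $\dim(\mathcal S)=\Delta(P)=2=n-3$ directly. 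Injectivity is checked via Theorem~\ref{dices} and Lemma~\ref{dice2}. If $Y\subseteq X$ with $4\le |Y|<n$, then $C_n[Y]$ is a disjoint union of paths, hence has at most $|Y|-1$ edges, so $P[Y]$ has at least $\binom{|Y|}{2}-(|Y|-1)$ edges; for $|Y|=4$ this is at least $3$, so $P[Y]$ is not a matching and contains two adjacent edges, and for $|Y|=5$ it is at least $6\ge 5$. For $|Y|=6$ I would show $P[Y]$ contains a triangle: group the six chosen elements into maximal cyclic runs of sizes $k_1,\dots,k_t$ (or, when $n=6$ and $Y=X$, use $\alpha(C_6)=3$), pick $\lceil k_i/2\rceil$ pairwise non-consecutive elements from the $i$-th run, and observe that $\sum_i\lceil k_i/2\rceil\ge\lceil 6/2\rceil=3$; these elements are pairwise non-adjacent in $C_n$, hence form a triangle in $P[Y]$. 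The remaining boundary cases ($n=5$ with $|Y|=5$, and $|Y|\in\{4,5\}$ when $n$ is close to $|Y|$) are read off directly. Thus $\mathcal S$ $4$-, $5$- and $6$-dices $X$, is therefore injective, and $\ID_2(n)\le n-3$.

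For the lower bound, let $\mathcal S\in\mathbb{S}_2(n)$ be injective, put $P=P(\mathcal S)$, and let $\bar P$ be its complement. By Theorem~\ref{dices} and Lemma~\ref{dice2}, $\mathcal S$ $5$-dices $X$, so $|E(P[Y])|\ge 5$, and hence $|E(\bar P[Y])|\le 5$, for every $Y\in\binom X5$. Double-counting incidences between $5$-element subsets of $X$ and edges of $\bar P$ contained in them gives
\[
|E(\bar P)|\binom{n-2}{3}=\sum_{Y\in\binom X5}|E(\bar P[Y])|\le 5\binom n5 ,
\]
so $|E(\bar P)|\le\tfrac14 n(n-1)$ and therefore $|E(P)|=\binom n2-|E(\bar P)|\ge\tfrac14 n(n-1)$. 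Hence $P$ has a vertex of degree at least $2|E(P)|/n\ge (n-1)/2$, i.e.\ $\Delta(P)\ge\lceil (n-1)/2\rceil=\lfloor n/2\rfloor$. Lemma~\ref{dim2} then gives $\dim(\mathcal S)\ge\Delta(P)\ge\lfloor n/2\rfloor$, and minimising over $\mathcal S\in\mathbb{S}_2(n)$ yields $\ID_2(n)\ge\lfloor n/2\rfloor$.

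The lower bound is a short averaging argument and should cause no trouble. I expect the bulk of the work to be in the upper bound, namely the bookkeeping showing that the complement-of-a-cycle split system $4$-, $5$- and $6$-dices $X$: although each individual check is elementary, the run-decomposition for $6$-element subsets and the handful of boundary cases where $|Y|$ is close to $n$ need to be stated with some care.
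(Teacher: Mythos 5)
Your proposal is correct. The upper bound is essentially the paper's own argument: both take the split system whose $2$-splits are exactly the non-edges of an $n$-cycle, read off $\dim(\mathcal S)=n-3$ from Lemma~\ref{dim2} (treating $n=5$, where the complement of $C_5$ is triangle-free, separately), and verify $4$-, $5$- and $6$-dicing via Lemma~\ref{dice2}; your edge-count in the complement and the decomposition of a $6$-subset into maximal cyclic runs is just a more systematic packaging of the same case analysis, and you correctly flag the boundary cases $Y=X$ for $n\in\{5,6\}$. The lower bound is where you genuinely diverge. The paper proves the strict monotonicity $\ID_2(n+2)>\ID_2(n)$ by a structural argument (every maximal incompatible subset of an optimal system on $n+2$ points must contain a split separating the two extra elements, so restriction to $n$ points strictly drops the dimension) and then inducts from $\ID_2(5)=2$. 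You instead extract the bound directly from $5$-dicing by double counting: every $5$-subset carries at least five edges of $P(\mathcal S)$, hence at most five non-edges, and since each non-edge lies in $\binom{n-2}{3}$ five-subsets this forces $|E(\overline{P(\mathcal S)})|\le \tfrac14 n(n-1)$, so $|E(P(\mathcal S))|\ge \tfrac14 n(n-1)$, giving a vertex of degree at least $\lceil (n-1)/2\rceil=\lfloor n/2\rfloor$, which Lemma~\ref{dim2} converts into $\dim(\mathcal S)\ge\lfloor n/2\rfloor$. This is shorter, avoids induction and the delicate claim about maximal incompatible subsets, and isolates exactly which dicing condition drives the bound (only $5$-dicing is used); the paper's route, in exchange, yields the monotonicity statement as a by-product. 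One small point of care: your identity $P(\mathcal S|_Y)=P(\mathcal S)[Y]$ is exact only for $|Y|\ge 5$; for $|Y|=4$ a single $2$-split of $X$ inside $Y$ contributes both complementary edges to $P(\mathcal S|_Y)$, but since you only ever need the containment $P(\mathcal S)[Y]\subseteq P(\mathcal S|_Y)$ there, nothing breaks.
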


\begin{proof}
We first show that $\ID_2(n) \leq n-3$ by constructing an injective 
split system $\mathcal S_n$ on $X_n=\{1, \ldots, n\}$ with $\dim(\mathcal S)=n-3$.
For this, let $\sigma_n$ denote some circular ordering of the 
elements of $X_n$. Let $\mathcal S_n$ denote the set of all splits $xy|X_n-\{x,y\}$ 
such that $x,y \in X_n$ are not consecutive under $\sigma_n$. By definition of $\mathcal S_n$, all 
vertices of $P(\mathcal S_n)$ have degree $n-3$. If $n \geq 6$, it follows by 
Lemma~\ref{dim2} that $\dim(\mathcal S_n)=n-3$. If $n=5$, it is straight-forward to 
check that $P(\mathcal S_n)$ does not contain a 3-clique. So, by 
Lemma~\ref{dim2}, $\dim(\mathcal S_n)=n-3$ holds in this case too. 
Thus, it remains to show that $\mathcal S_n$ is injective. In view of Theorem~\ref{dices}, 
we do this by showing that $\mathcal S_n$ 4-, 5- and 6-dices $X_n$.

To see that $\mathcal S_n$ 4-dices $X_n$, let
$Y \in {X_n \choose 4}$ which exists as $n\geq 5$. By Lemma~\ref{dice2}, it suffices to show that 
there exists an element of $Y$ that has degree 2 or more in $P(\mathcal S_n|_Y)$. Let $x \in Y$. 
If $\deg_{P(\mathcal S_n|_Y)}(x)\geq 2$, we are done by the definition of $\mathcal S_n$. 
Otherwise, $Y$ contains two 
elements $y$ and $z$ such that $y$ and $z$ precede and follow $x$ 
under $\sigma_n$, respectively. Let $t$ be the fourth element of $Y$. Then $\{x,t\}$ is
 an edge in $P(\mathcal S_n)$. Moreover, since $n \geq 5$ and $t \neq x$, there 
must be at least one of $y,z$ that is adjacent with $t$ in $P(\mathcal S_n)$. Thus, 
$\deg_{P(\mathcal S_n|_Y)}(t)\geq 2$, as required.

To see that $\mathcal S_n$ 5-dices $X_n$, let
$Y \in {X_n \choose 5}$ which again exists because $n\geq 5$. By Lemma~\ref{dice2}, it suffices to show that 
$P(\mathcal S_n|_Y)$ contains at least five edges. To see this, note first that, for all $x \in X$, there are at 
most two elements in $Y-\{x\}$ that do not form an edge with $x$ in $P(\mathcal S_n|_Y)$ 
because $\mathcal S_n$ is circular. For all
 $x \in Y$, it follows that $\deg_{P(\mathcal S_n|_Y)}(x)\geq 2$. Since $Y$ 
contains five elements, this imples that $P(\mathcal S_n|_Y)$ contains at least five edges, as required.

Finally, to see that $\mathcal S_n$ 6-dices $X_n$, note first that we
may assume that $|X|\geq 6$ as otherwise $\mathcal S_n$ 6-dices $X_n$ by definition.
Let $Y \in {X_n \choose 6}$. By Lemma~\ref{dice2}, it suffices to show 
that $P(\mathcal S_n|_Y)$ contains a 3-clique. To see this, let $x \in Y$. 
Then, by the definition of $P(\mathcal S_n)$, there exist at least 
three elements in $Y$, say $y$, $z$ and $t$, that form an edge with $x$ in $P(\mathcal S_n)$. 
Moreover, at least two of $y$, $z$ and $t$, say $y$ and $z$, must form an 
edge $\{y,z\}$ in $P(\mathcal S_n)$ since $y$, $z$ and $t$ cannot all be 
consecutive with each other under $\sigma_n$. It follows that
 $\{x,y,z\}$ is the vertex set of a 3-clique in $P(\mathcal S_n|_Y)$, as required.
This concludes the proof that $\ID_2(n) \leq n-3$.

We now show that $\lfloor \frac{n}{2} \rfloor  \le \ID_2(n)$.
We begin by showing that $\ID_2(n+2)>\ID_2(n)$, for all $n \geq 3$. 
Assume that $n\geq 3$. Also, assume that $\sigma_{n+2}$ is the natural
ordering of $X_{n+2}=\{1,2,\ldots, n,n+1, n+2\}$. Let $\mathcal S\in \mathbb{S}_2(n+2)$ denote a split system on
$X_{n+2}$ that attains $\ID_2(n+2)$.
Let $\mathcal S'$ denote a maximal incompatible subset of $\mathcal S$. 
We claim that $\mathcal S'$ must contain a non-trivial split that separates 
the elements $n+1$ and $n+2$. Clearly, $\mathcal S$ must contain such a split as
otherwise Lemma~\ref{medinbu} implies that $\phi_Y(S)=\phi_{Y'}(S)$ holds for all $S \in \mathcal S$ and 
all $Y,Y' \in {X_{n+2} \choose 3}$ with $Y \cap Y'=\{n+1,n+2\}$. Hence, $\mathcal S$ is not injective
which is impossible.
Choose a split $S_0\in\mathcal S$ such that $S_0(n+1)\not=S_0(n+2)$. Assume for contradiction that all splits $S \in \mathcal S'$ 
satisfy $S(n+1)=S(n+2)$. Then $\mathcal S_0$ is incompatible with every split
in $\mathcal S'$ because $S_0$ and every split in $\mathcal S'$ have size two.
Hence, $\mathcal S'\cup\{S_0\}$ is an incompatible subset of $\mathcal S$ that contains $\mathcal S'$ as a proper
subset which contradicts the choice of $S'$.

Consider now the restriction $\mathcal S_n$ of $\mathcal S$ to $X_n$. By 
Corollary~\ref{sub-inj}, $\mathcal S_n$ is injective because $\mathcal S$ is injective. Moreover, since all maximal 
incompatible subsets of $\mathcal S$ contain a split separating $n+1$ and $n+2$ by the previous claim, it follows that no maximal incompatible subset of $\mathcal S_n$ has size equal to
$\dim(\mathcal S)$. Hence, $\dim(\mathcal S_n)< \dim(\mathcal S)$. 
Since $\dim(\mathcal S)=\ID_2(n+2)$ by the choice of $\mathcal S$, and 
$\dim(\mathcal S_n) \geq \ID_2(n)$ by the injectivity of $\mathcal S_n$, it follows 
that $\ID_2(n+2)>\ID_2(n)$, as required.

We conclude with showing that  $\ID_2(n) \geq \lfloor \frac{n}{2} \rfloor$ holds by performing
induction on $n$. 
If $n =5$ then $\ID_2(n)=\ID(n)$ since all non-trivial splits on $X_n$ 
are 2-splits and $\ID(n)= \lfloor \frac{n}{2} \rfloor$ holds by 
Corollary~\ref{cor:ID-numbers}. This implies the stated inequality  
in this case. Now, let $n>5$ and assume that the stated 
inequality holds for all $5\leq n'<n$. Since $\ID_2(n)>\ID_2(n-2)$
it follows by induction hypothesis 
that $\ID_2(n)> \ID_2(n-2)\geq \lfloor \frac{n-2}{2} \rfloor$. Hence,
 $\ID_2(n) \geq \lfloor \frac{n-2}{2} \rfloor +1=\lfloor \frac{n}{2} \rfloor$, 
as desired.
\end{proof}

\section{Rooted injective dimension}\label{sec:idr}

In this section, we consider another variant of the injective
dimension which behaves quite differently from $\ID(n)$.
Let $X$ denote a set with $|X|=n$. Choose some element $r \in X$. 
For $Z \in {X-\{r\} \choose 2}$, put $Z_r=Z \cup \{r\}$.
We say that a split system is {\em rooted-injective (relative to $r$)} if
\[
\phi_{Z_r} \neq \phi_{Z'_r}
\]
for all $Z,Z' \in {X \choose 2}$ distinct. This 
concept is closely related to the rooted median graphs considered in \cite{B22}.
Note that if $X=3$ then the (unique) split system on $X$ is $r$-rooted injective for
any choice of $r\in X$. Also,
note that if $\mathcal S$ is injective, then $\mathcal S$ is 
rooted-injective relative to $r$, for all $r \in X$. The converse, however, 
does not hold. For example, the split system $\mathcal S$ on $X=\{1, \ldots, 6\}$ 
whose set of non-trivial splits is:
\[
\{14|\overline{14}, 15|\overline{15}, 16|\overline{16}, 24|\overline{24}, 25|\overline{25}, 26|\overline{26}, 
34|\overline{34}, 35|\overline{35}, 36|\overline{36}\}
\]
is not injective because $\mathcal S$ does not 6-dice  $X$ and so Theorem~\ref{dices} does not hold. But $\mathcal S$
is rooted-injective relative to $r$, for all $r \in X$.

For $n \ge 3$, $X$ a set with $|X|=n$ and some $r\in X$, we define the
{\em rooted-injective dimension} $\ID^r(n)$ to be
\[
\ID^r(n) = \min\{  dim(\mathcal S) : \mathcal S \mbox{ is a rooted-injective 
split system on $X$ relative to $r$} \}.
\]

Our next result (Theorem~\ref{th:idr}) shows that $\ID^r(n)$ is well-defined for all $n\geq 3$, 
and that, in contrast to $\ID_2(n)$,  $\ID^r(n)$ is always equal to 2 when $n \geq 4$.

\begin{theorem}\label{th:idr}
	Suppose that $X$ is such that  $n=|X|\ge 4$ and that $r \in X$. Then 
	there exists a rooted-injective split system $\mathcal S$ on $X$ relative to $r$ with $\dim(\mathcal S)=2$.
	Moreover $\ID^r(n) =2$.
\end{theorem}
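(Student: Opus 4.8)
The plan is to give an explicit construction of a rooted-injective split system of dimension~$2$ for every $n\ge 4$, and then to observe that dimension~$1$ is impossible, so that $\ID^r(n)=2$. For the construction, I would fix the element $r\in X$ and write $X-\{r\}=\{x_1,\dots,x_{n-1}\}$. The idea is to use only $2$-splits, arranged so that the graph $P(\mathcal S)$ on $X-\{r\}$ (in the notation of Section~\ref{sec:id2}) is a path $x_1x_2\cdots x_{n-1}$, together with trivial splits; equivalently $\mathcal S$ consists of the splits $x_ix_{i+1}\mid\overline{x_ix_{i+1}}$ for $1\le i\le n-2$, plus all trivial splits. Since a path contains no triangle and has maximum degree~$2$, Lemma~\ref{dim2}(i) gives $\dim(\mathcal S)=2$ (for $n\ge 4$; for $n=4$ the path has an edge of degree~$2$ so the dimension is still $2$, and one should double-check the tiny case $n=4$ by hand). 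The substance is then to verify rooted-injectivity relative to $r$: for any two distinct pairs $Z=\{a,b\}$ and $Z'=\{c,d\}$ in $\binom{X-\{r\}}{2}$ we must produce a split $S\in\mathcal S$ with $\phi_{Z_r}(S)\ne\phi_{Z'_r}(S)$, where by Lemma~\ref{medinbu} $\phi_{Z_r}(S)$ is the part of $S$ containing at least two of $\{a,b,r\}$.

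For the verification I would argue as follows. Write $Z_r=\{r,x_i,x_j\}$ with $i<j$ and $Z'_r=\{r,x_k,x_\ell\}$ with $k<\ell$. A trivial split $y\mid\overline y$ has $\phi_{Z_r}=\overline y$ unless $y\in\{r,x_i,x_j\}$; so if $Z\ne Z'$ but the medians agree on all trivial splits, then $\{i,j\}=\{k,\ell\}$, contradiction — hence we may even just use a suitable trivial split in most cases. More carefully: pick any index $m$ that lies in exactly one of $\{i,j\}$ and $\{k,\ell\}$, say $m\in\{i,j\}\setminus\{k,\ell\}$; then for the trivial split $S=x_m\mid\overline{x_m}$ we have $\phi_{Z_r}(S)=\overline{x_m}$ is false — rather $\phi_{Z_r}(S)$ is the part with two of $\{r,x_i,x_j\}$, which is $\overline{x_m}$ since $r$ and the other of $x_i,x_j$ both lie there, wait this needs the $2$-splits. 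The cleaner route is: the trivial splits alone already separate $Z_r$ from $Z'_r$ whenever $\{i,j\}\ne\{k,\ell\}$, because $\phi_{\{r,x_i,x_j\}}$ restricted to trivial splits records precisely which singletons among $x_i,x_j$ are ``inside'', and this determines $\{i,j\}$. So in fact any injective-on-trivial-parts bookkeeping works, and the $2$-splits are only needed to push the dimension down from what the all-$2$-splits system would give while still keeping $4$-, $5$-, $6$-dicing on the relevant $3$-subsets through $r$. I would instead verify rooted-injectivity directly via Lemma~\ref{cases}: for $3$-sets sharing $r$, rooted-injectivity is exactly the statement ``$\phi_A\ne\phi_B$ for all distinct $A,B\in\binom X3$ with $r\in A\cap B$'', a subcase of what Proposition~\ref{prop45d}(ii) handles, so it suffices to check that $\mathcal S$ satisfies the $4$- and $5$-dicing conditions on every $4$- and $5$-subset containing $r$. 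For a path $P(\mathcal S)$ this is a short combinatorial check: any $4$-subset of $X-\{r\}$ spans at least one edge-adjacent pair of path-edges when we also allow the edges to $r$ recorded as trivial-split data, and similarly for $5$-subsets one counts at least five $2$-splits in the restriction once the trivial splits at $r$ are counted.

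Finally, for the matching lower bound $\ID^r(n)\ge 2$: a dimension-$1$ split system has $\mathcal B(\mathcal S)$ a phylogenetic tree, and a tree on $n\ge 4$ leaves has at most $n-2$ internal vertices, whereas rooted-injectivity relative to $r$ forces the $\binom{n-1}{2}$ medians $\phi_{Z_r}$ to be pairwise distinct internal vertices, and $\binom{n-1}{2}>n-2$ for $n\ge 4$; this is the same contradiction used for $\ID(4)=\ID(5)=2$ in the proof of Theorem~\ref{cor:ID-numbers}. Combined with the construction, $\ID^r(n)=2$.

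The main obstacle I anticipate is the rooted-injectivity verification for the explicit family: getting the bookkeeping right for which part of each (trivial or $2$-) split contains two of $\{r,x_i,x_j\}$, and checking that no two distinct through-$r$ triples collapse. I expect this to reduce, via Lemma~\ref{cases} and Proposition~\ref{prop45d}(ii), to confirming the $4$- and $5$-dicing conditions on subsets containing $r$, which for the path construction is routine but must be done carefully. An alternative, possibly cleaner, route the authors may take is to invoke the result of \cite{B22} on rooted median graphs directly, as the excerpt hints, rather than building the family by hand.
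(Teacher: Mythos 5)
Your lower bound is sound and is essentially the paper's argument: a dimension-$1$ (i.e.\ compatible) split system yields a phylogenetic tree with at most $n-2$ interior vertices, which cannot accommodate $\binom{n-1}{2}$ pairwise distinct medians through $r$ once $n\ge 4$. The gap is in your construction. The path system $\mathcal S=\{x_ix_{i+1}|\overline{x_ix_{i+1}} : 1\le i\le n-2\}\cup\{\text{trivial splits}\}$ is \emph{not} rooted-injective relative to $r$ for any $n\ge 5$. First, by Lemma~\ref{medinbu}, for any triple $Y$ and any trivial split $S=y|\overline{y}$ one always has $\phi_Y(S)=\overline{y}$, since $\overline{y}$ contains at least two of the three elements of $Y$; so trivial splits never separate two medians, and your claim that ``the trivial splits alone already separate $Z_r$ from $Z'_r$'' is false. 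Second, for a $2$-split $S=x_ix_{i+1}|\overline{x_ix_{i+1}}$ the root $r$ always lies in the big part, so $\phi_{\{r,x_a,x_b\}}(S)$ equals the small part only when $\{x_a,x_b\}=\{x_i,x_{i+1}\}$ exactly. Hence every pair $Z$ that is not an edge of the path has $\phi_{Z_r}$ equal to the map sending every split to its big part; for instance $Z=\{x_1,x_3\}$ and $Z'=\{x_1,x_4\}$ give $\phi_{Z_r}=\phi_{Z'_r}$ whenever $n\ge 5$. Your fallback via Proposition~\ref{prop45d}(ii) fails for the same reason: the restriction of $\mathcal S$ to $\{r,x_1,x_3,x_4\}$ contains only the single $2$-split $x_3x_4|x_1r$, so the $4$-dicing condition does not hold, and by Lemma~\ref{cases}(i) that split does not separate the two medians. (Your construction does happen to work for $n=4$, where every pair except $\{x_1,x_3\}$ is a path edge.)

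The paper's construction is different in an essential way: it uses splits with \emph{large} small parts. Ordering $X-\{r\}$ as $1,\dots,n-1$, take the two families of prefix splits $\{1,\dots,k\}|\{k+1,\dots,n-1\}\cup\{r\}$ and suffix splits $\{k,\dots,n-1\}|\{1,\dots,k-1\}\cup\{r\}$; each family is compatible, so $\dim(\mathcal S)=2$ (the Buneman graph is a half-grid), and for distinct $Z,Z'$ at least one of $\min(Z\cup Z')$ and $\max(Z\cup Z')$ lies outside $Z\cap Z'$, whence the corresponding prefix or suffix split separates $\phi_{Z_r}$ from $\phi_{Z'_r}$. The structural lesson is that a split $A|B$ with $r\in B$ can distinguish two triples through $r$ only if $A$ contains one of the pairs $Z,Z'$ entirely, so $2$-splits of bounded degree cannot cover all $\binom{n-1}{2}$ pairs; interval splits of growing size can.
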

\begin{proof}
Put $X = \{1,2,\dots,n-1,r\}$. 
First note that $\ID^r(n)\ge 2$, since if $\ID^r(n) =1$, 
then there would be a rooted-injective split system $\mathcal S$ 
on $X$ relative to $r$ with $\dim(\mathcal S)=1$. But this is not possible 
since then the Buneman graph $B(\mathcal S)$ associated to $\mathcal S$ would be
a phylogenetic tree on $X$ with $|\mathcal S|+1$ edges. Using a similar argument to the one used 
to show that $\ID(4)=\ID(5)=2$ in the proof of Theorem~\ref{cor:ID-numbers}, it is straight-forward to
check that then $\mathcal S$ is not rooted-injective which is impossible.

Now, define the split system $\mathcal S$ on 
$X$ whose subset of non-trivial splits is equal to $\mathcal S_1 \cup \mathcal S_2$, where:
\[
\mathcal S_1=\{ \{n-1-i,\ldots,n-1\} | \overline{\{n-1-i,\ldots,n-1\}} \cup\{r\} \,:\, 0 \le i \le n-3\}
\]
and
\[
\mathcal S_2=\{ \{n-1-i,\ldots,1\} | \overline{\{n-1-i,\ldots,1\}} \cup\{r\} \,:\, 0 \le i \le n-3\}. 
\]

\begin{figure}[h]
\begin{center}
\includegraphics[scale=0.7]{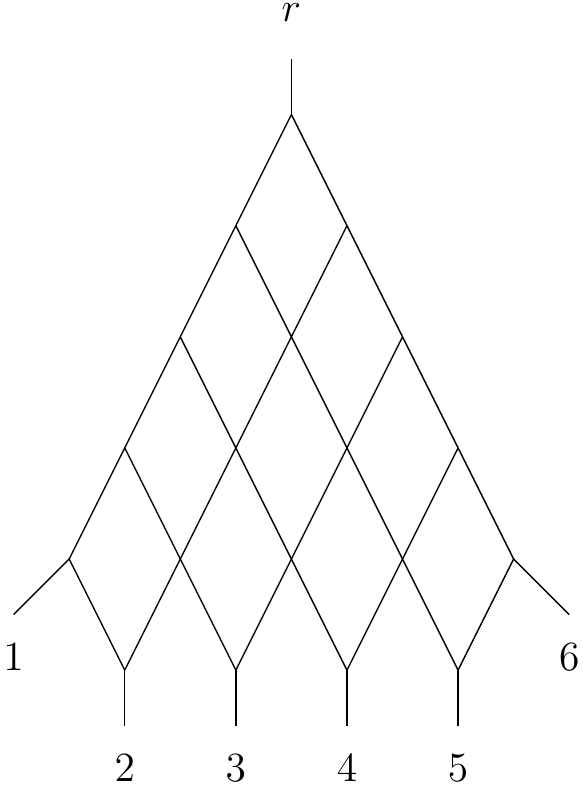}
\caption{The Buneman graph of a split system on $\{1,2,3,4,5, 6,r\}$ that is rooted injective relative to $r$
that is constructed as described in the proof of Theorem~\ref{th:idr}.}
\label{fig:grid}
\end{center}
\end{figure}

For example, for $n=7$, the Buneman graph $B(\mathcal S)$ of $\mathcal S$ 
is the half-grid pictured in Figure~\ref{fig:grid}. More precisely, in that figure, 
the splits in $\mathcal S_1$ and $\mathcal S_2$ are the splits associated to edges 
oriented downwards from left to right and from right to left, respectively. 

To see that $\dim(\mathcal S)=2$, it suffices to remark that 
$\mathcal S_1$ and $\mathcal S_2$ are compatible, so a maximal incompatible 
subset of $\mathcal S$ has size at most $2$. Since $\mathcal S$ is not compatible, 
it follows that $\dim(\mathcal S)=2$.

We next show that $\mathcal S$ is rooted-injective relative to $r$. To see this, 
let $Z, Z' \in {X-\{r\} \choose 2}$ distinct. Also, let $x^-=\min(Z \cup Z')$ 
and $x^+=\max(Z \cup Z')$. Since the $Z \cup Z'$ has size 
at least $3$, we have that $x^-$ and $x^+$ are distinct. Furthermore, $x^- \leq n-3$ 
and $x^+ \geq 3$ must hold.  In particular, the splits $S^-=\{x^-+1, \ldots, n-1\}|\{1, \ldots, x^-\} \cup \{r\}$ 
and $S^+=\{1, \ldots, x^+-1\}|\{x^+, \ldots, n-1\} \cup \{r\}$ belong to $\mathcal S_1$ 
and $\mathcal S_2$ respectively, so both splits belong to $\mathcal S$. 
Moreover, $Z \cap Z'$ contains at most one element, so at least one of $x^-$ and $x^+$ 
does not belong to $Z \cap Z'$. If $x^- \notin Z \cap Z'$ 
then $S^-$ satisfies $\phi_{Z_r}(S^-) \neq \phi_{Z'_r}(S^-)$, and if  if $x^+ \notin Z \cap Z'$
then $S^+$ satisfies $\phi_{Z_r}(S^+) \neq \phi_{Z'_r}(S^+)$. So, $\mathcal S$ is rooted-injective relative to $r$.
\end{proof}

\begin{remark}
	The proof that the split system $\mathcal S$ is rooted-injective relative to $r$ in
	Theorem~\ref{th:idr} gives an alternative 
	proof that the extended half-grid for $(n+1)$ in \cite[p.7]{B22} can be used to 
	represent a symbolic map,
	since the Buneman graph  $B(\mathcal S)$ with 
	the pendant edge containing $r$ contracted is isomorphic to 
	the extended half-grid on $n$.
\end{remark}

Note that the rooted-injective split system $\mathcal S$ in the proof of Theorem~\ref{th:idr}
is the union of two split systems $\mathcal S_1$ and $\mathcal S_2$
whose associated Buneman graphs are phylogenetic trees.
In general, if $\mathcal S$ is a split system on $X$ with this property
then $\dim(\mathcal S)\leq 2$ (since every 
3-subset of $\mathcal S$ must contain at least one pair of splits 
that is contained in one of the split systems, and so this pair of splits must be compatible).
Hence, by Theorem~\ref{cor:ID-numbers},  $\mathcal S$ cannot be injective in case $|X| \ge 6$.

\section{Discussion}\label{sec:discuss} 

In this paper we have defined and explored the concept of injective split
systems, that is, splits systems $\mathcal S$ on a set $X$ such that two
distinct sets of three elements of $X$ have distinct median vertex in the
Buneman graph $\mathcal B(\mathcal S)$ associated to $\mathcal
S$. Making use of the notion of dicing, we have shown that a given
split system is injective if and only if its subsets of size $6$ or less
are injective, from which we derived a characterization of injective split
systems. We also studied the injective dimension of an integer $n \geq 3$,
that is, the minimal dimension of an injective split system on some set of
$n$ elements. On this topic, it remains an open question whether there is a
lower bound for $\ID(n)$ that is linear in $n$.

The notion of an injective split system also suggests to consider a
matching concept of surjective split systems.  We call a split system $\cS$
on some set $X$ with $|X|\geq 3$ \emph{surjective} if the vertex set of
$B(\mathcal S)$ is equal to
\begin{equation}
  \{\phi_x \,:\, x \in X\} \cup \{ \phi_Y \,:\, Y \in {X \choose 3}\},
\end{equation}
In other words, every non-leaf vertex in $B(\cS)$ is the median of three
leaves in $B(\cS)$. Note that every split system whose Buneman graph is a
phylogenetic tree is surjective but, for example, the split system
corresponding to the Buneman graph in example in Fig.~\ref{bubu}(ii) is not
surjective because the central vertex in the graph is not the median of any
three leaves. The general properties of surjective split systems remain to
be investigated.

Naturally, one may want to study \emph{bijective} split system $\cS$
 that are both injective and surjective. We conjecture that a split
system $\cS$ on some set $X$ with $|X|\geq 3$ is bijective if and only if
either $|X|=3$ and $|\cS|=3$ or $|X|=4$, $|\cS|=6$ (i.\,e.\,the Buneman
graph associated to $\cS$ is a three-leaved phylogenetic tree or -- up to
leaf relabelling -- the graph in Fig.~\ref{bubu}(i), respectively).  A
proof or counter-example for this conjecture might use concepts that are
related to the so-called median stabilization degree of a median algebra --
see e.g. \cite{B99,evans1982median}.
  
Finally, another interesting open problem is the following: Can we develop
a modular decomposition theory for Buneman graphs along the lines described
in \cite{B22}?

\begin{acknowledgements}
  This work was supported in part by the German Federal Ministry for
    Education and Research (BMBF 031L0164C, RNAProNet, to P.F.S.). The authors would like to thank the Institut Mittag-Leffler in Djursholm, Sweden for hosting the conference ``Emerging Mathematical Frontiers in Molecular Evolution" in August 2022, where this work was finalized.
\end{acknowledgements}

\section*{Conflict of interest}

The authors declare that they have no conflict of interest.

\section*{Data availability}

Not applicable.

\bibliographystyle{spmpsci}
\bibliography{injective}

\end{document}